\numberwithin{equation}{section}
\def\a{\alpha}
\def\b{\beta}
\newcommand{\n}{\nu}
\def\gm{\gamma}
\def\s{\sigma}
\def\C{\ensuremath{\mathbb{C}}}
\def\D{\ensuremath{\mathbb{D}}}
\def\R{\ensuremath{\mathbb{R}}}
\def\Z{\ensuremath{\mathbb{Z}}}
\newcommand{\Zp}{\ensuremath{\mathbb{Z}^{+}}}
\def\pd{\partial}
\newcommand{\mr}[1]{\mathrm{#1}}
\newcommand{\abs}[1]{\left|{#1}\right|}
\newcommand{\nrm}[1]{\left\|{#1}\right\|}
\newcommand{\set}[1]{\left\{#1\right\}\relax}
\newcommand{\scal}[1]{\left\langle\relax #1 \relax\right\rangle}
\newcommand{\qtxtq}[1]{\quad \text{#1}\quad}
\newtheorem{thm}{Theorem}[section]
\newtheorem{theorem}{Theorem}[section]
\newtheorem{proposition}[thm]{Proposition}
\newtheorem{corollary}[thm]{Corollary}
\title{An Integral Expression for the Dunkl Kernel in the 
Dihedral Setting}
\author{M. Maslouhi}
\address{M. Maslouhi: Ecole Nationale des Sciences Appliqu\'ees. Universit\'e Ibn Tofail, 14000 Kenitra, Maroc.}
\email {mostafa.maslouhi@univ-ibntofail.ac.ma}
\subjclass[2000]{47B48; 33C52; 33C67; 31B05}
\keywords{ Dihedral group, Dunkl operator, Dunkl kernel, Dunkl intertwining operator.}
\begin{document}

\maketitle

\markboth{M. Maslouhi}{An Integral Expression for the Dunkl Kernel in the 
Dihedral Setting}

\begin{abstract} 
	In this paper, we establish an integral expression for the Dunkl kernel in the context of Dihedral group of an arbitrary order by using the results in \cite{M-Y-Vk} where a construction of the Dunkl intertwining operator for a large set of regular parameter functions is provided. We introduce a differential system that leads to the explicit expression of the Dunkl kernel whenever an appropriate solution of it is obtained. In particular, an explicit expression of the Dunkl kernel $E_k(x,y)$ is given when one of its argument $x$ or $y$ is invariant under the action of a known reflection in the Dihedral group.  We obtain also a generating series for the homogeneous components $E_m(x,y)$, $m\in\Zp$, of the Dunkl kernel from which we derive new sharp estimates for the Dunkl kernel when the parameter function $k$ satisfies $\mr{Re}(k)>-\nu$,  $\nu$ an arbitrary nonnegative integer, which, up to our knowledge, is the largest context for such estimates so far.  
\end{abstract}

\section{Introduction And Preliminaries}\label{sec: prelim}
In \cite{DUNKL4}, Dunkl defined a family of first-order differential-difference operators associated to a Coxeter group. These operators generalize in a certain manner the usual differentiation and have gained  during recent years considerable interest in various fields of mathematics and also in physical applications (see \cite{DEJEU1,DEJEU3,LV} and the references therein). Our paper gives some new results around the Dunkl kernel \cite{DEJEU1} which is a key tool in this theory and whose expression is  unfortunately known explicitly only in few cases. 

In this paper, we use the results in \cite{M-Y-Vk} to establish an integral expression for the Dunkl kernel in the context of Dihedral group of an arbitrary order. This allows us to obtain the explicit expression of the Dunkl kernel $E_k(x,y)$ when one of its argument $x$ or $y$ is invariant under the action of a known reflection in the Dihedral group.  We obtain also a generating series for the homogeneous components $E_m(x,y)$, $m\in\Zp$, of the Dunkl kernel from which we derive new sharp estimates for the Dunkl kernel when the parameter function $k$ satisfies $\mr{Re}(k)>-\nu$,  $\nu$ an arbitrary nonnegative integer, which, up to our knowledge, is the largest context for such estimates so far. 

The paper is organized as follows. The remaining of this section serves to introduce concepts and notations needed for the sequel.  In section \ref{sec: case of D}, we set up the Dihedral group setting and we establish an important relationship between the homogeneous components $E_m$, $m\in\Zp$, of the Dunkl kernel, which turn to be crucial for our integral  representation. In section \ref{sec:a_fundamental_system},  we introduce and study a fundamental differential system whose solutions are closely related to the explicit expression of the Dunkl kernel. Section \ref{sec: integral expression} establishes the integral expression for the Dunkl kernel and finally section \ref{sec: applications} is devoted to some applications.

Fix a reduced root system  $R$ (see \cite{humphreys}) and consider its associated Coxeter group $G$ generated by the reflections $\s_\a$ where $\a\in R$ and $$\s_\a(x):=x-2\scal{x,\a}\a/\abs{\a}^2,\quad (x\in\R^d).$$
Here $\scal{\,,\,}$ denotes the canonical inner product in the space $\R^d$ and $\nrm{\,}$
its euclidean norm. We extend the form $\scal{\,,\,}$ to a bilinear form on $\C^d\times\C^d$ again denoted by $\scal{\,,\,}$.

Throughout the paper,  if $B\in \C^{d\times d}$, $\nrm{B}$ stands for the supremum norm of $B$.  

The action of $G$ on functions $f:\R^d\to \C$ is defined by  
\begin{equation}\label{eq: action of G}
L_{g}(f):= f\circ g^{-1},\quad g\in G.
\end{equation}

Set $R_+:=\{\a\in R:\,\scal{\a,\b}>0\}$ for some $\b\in\R^d$ such that $\scal{\a,\b}\ne 0$ for all $\a\in R$.

Let $k$ be a parameter function on $R$, that is,  $k:R\to \C$ and $G$-invariant.

 For $\xi\in\R^n$, the Dunkl operator $T_\xi$ on $\R^d$ associated to the group $G$ and the parameter function $k$ is acting on functions $f\in C^1(\R^d)$  by
$$  T_\xi(k) f(x)=\pd_\xi f(x)+\sum_{\a\in R_+}k(\a)\scal{\a,\xi}\frac{f(x)-f(\s_\a x)}{\scal{\a,x}},\quad (x\in\R^d).$$

Let $M$ be the vector space of all parameter functions and set $$M^{reg}:=\set{k\in M:\,
\cap_{\xi \in \R^d}\ker(T_\xi(k))=\C\cdot 1}.$$ 
$M^{reg}$ is called the set of regular parameter functions. 

We let $\mathcal{P}_m$, $m\in\Zp$, denotes the space of all homogeneous polynomials of degree $m$ in the $\C$-algebra $\Pi^d:=\C[\R^d]$.

 It has been shown in \cite{DUNKL5} and \cite{DJO} that for each $k\in M^{reg}$
there exists a unique isomorphism $V_k$ of $\Pi^d$, called the intertwining
operator, satisfying
\begin{equation}\label{eq: def principal of V_k}
    V_k(\mathcal{P}_n)\subset \mathcal{P}_n,\quad  V_k(1)=1\qtxtq{and} T_\xi V_k=V_k\pd_\xi, \quad
    \forall\xi\in\R^d.
\end{equation}

The Dunkl kernel or the generalized exponential $E_k(\cdot,\cdot)$, is a key tool in the Dunkl's theory and it was introduced \cite{DUNKL2} as the unique function satisfying $$E_k(0,y)=1,\quad T_\xi E_k(x,y)=\scal{\xi,y}E_k(x,y),\quad \xi,x\in\R^d,\quad  y\in\C^d.$$ Further details about this kernel may be found in \cite{DEJEU1,DUNKL2}.

The expression of the kernel $E_k$ is known explicitly only in few cases. Following \cite{M-Y-Vk} we have
\begin{equation*}
E_k(x,y)=\sum_{m=0}^\infty E_m(x,y),\quad x\in\R^d,\ y\in\C^d,
\end{equation*}  where 
\begin{equation}\label{eq: def of E_m}
E_m(x,y):=\frac{1}{m!}V_k(\scal{\cdot,y}^m)(x),
\end{equation}
for a large set $M^{*}$ of regular parameter functions $k$ presented later in this section. 

We recall from \cite{M-Y-Vk} some notations used 
along this paper. We consider the operator $A$ acting on functions  $f: \R^d \to \C$ by
\begin{equation}\label{eq: def of A}
A:=\sum_{\b\in R_+}k(\b) L_{\s_\b},
\end{equation} where $L_{g}$ is defined by \eqref{eq: action of G}.
 It is clear that $A$ leaves invariant the space $\mathcal{P}_m$ for all nonnegative integer $m$, thus $A_m:=A_{|\mathcal{P}_m}$ is an endomorphism of $\mathcal{P}_m$. 

Let $M^*$ denotes the set of all parameter functions for which the operator
$$H_m:=\left((m+\gm)-A_m\right)^{-1},\quad \gm:=\sum_{\a\in R_+}k(\a)$$ is defined for all $m\ge 1$. When $k\in M^*$, define the operator $H$ by $H_{|\mathcal{P}_m} := H_m$.

Appealing to \cite{M-Y-Vk}, $M^*$ includes the set of all parameter functions whose real part is nonnegative and $M^*\subset M^{reg}$. Moreover we have the crucial result: 
\begin{thm}\cite{M-Y-Vk}\label{thm: Main thm of V}\quad  Assume that $k\in 
M^{*}$. Let $m$ be a positive integer and $p\in \mathcal{P}_m$. Then  we have 
$$V_k(p)(x)=\sum_{j=0}^{d}x_jV_k(\pd_j (H \,p))(x), \quad (x\in\R^d).$$
\end{thm}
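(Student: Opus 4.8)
The plan is to reduce the claimed formula to two ingredients: a pointwise identity rewriting $\sum_{j=1}^d x_jT_j$ in terms of the Euler operator and the operator $A$, and the fact that $V_k$ commutes with the action of $G$ on $\Pi^d$. For the first ingredient, write $T_j:=T_{e_j}(k)$ and let $\mathcal{E}:=\sum_{j=1}^d x_j\pd_j$ be the Euler operator. A direct computation gives, for $f\in C^1(\R^d)$,
\[
\sum_{j=1}^d x_jT_jf(x)=\mathcal{E}f(x)+\gm f(x)-Af(x),
\]
since $\sum_j x_j\scal{\a,e_j}=\scal{\a,x}$ cancels the denominator $\scal{\a,x}$ in the difference quotient, and $f(\s_\a x)=L_{\s_\a}f(x)$ because $\s_\a$ is an involution, so that summing the terms $k(\a)\bigl(f(x)-L_{\s_\a}f(x)\bigr)$ over $\a\in R_+$ produces $\gm f-Af$. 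Restricting to $\mathcal{P}_m$, where $\mathcal{E}$ acts as multiplication by $m$, this reads $\sum_{j=1}^d x_jT_j\big|_{\mathcal{P}_m}=(m+\gm)-A_m=H_m^{-1}$.

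For the second ingredient I would prove $V_k\circ L_g=L_g\circ V_k$ for every $g\in G$ using the uniqueness characterization \eqref{eq: def principal of V_k}. Set $\tilde V:=L_g^{-1}V_kL_g$. Using that $L_g$ is a linear automorphism of $\Pi^d$ preserving each $\mathcal{P}_n$ with $L_g1=1$, together with the transformation rules $L_gT_\xi L_g^{-1}=T_{g\xi}$ and $L_g\pd_\xi L_g^{-1}=\pd_{g\xi}$ and the $G$-invariance of $k$, one checks that $\tilde V$ also satisfies $\tilde V(\mathcal{P}_n)\subset\mathcal{P}_n$, $\tilde V(1)=1$ and $T_\xi\tilde V=\tilde V\pd_\xi$ for all $\xi\in\R^d$; by uniqueness $\tilde V=V_k$, i.e.\ $V_k$ commutes with $L_g$. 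Since $A=\sum_{\b\in R_+}k(\b)L_{\s_\b}$ has scalar coefficients, $V_k$ then commutes with $A$, hence with $A_m$ and with $(m+\gm)-A_m$ on $\mathcal{P}_m$.

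With these in hand, fix $m\ge1$ and $p\in\mathcal{P}_m$, and put $q:=Hp=H_mp$, which lies in $\mathcal{P}_m$ and is well-defined precisely because $k\in M^*$; by definition of $H_m$ one has $\bigl((m+\gm)-A_m\bigr)q=p$. Apply $V_k$ and use the commutation from the previous paragraph to pull $V_k$ through $(m+\gm)-A_m$, obtaining $V_k(p)=\bigl((m+\gm)-A_m\bigr)V_k(q)$. As $V_k(q)\in\mathcal{P}_m$, the operator identity of the first step rewrites the right-hand side as $\sum_{j=1}^d x_jT_j\bigl(V_k(q)\bigr)(x)$, and finally the intertwining relation $T_jV_k=V_k\pd_j$ turns this into $\sum_{j=1}^d x_jV_k(\pd_jq)(x)=\sum_{j=1}^d x_jV_k\bigl(\pd_j(Hp)\bigr)(x)$, which is the asserted identity.

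I expect the main obstacle to be the second ingredient: although the $G$-equivariance of the intertwining operator is classical, deducing it cleanly requires the uniqueness of $V_k$ together with a careful verification of the transformation rule $L_gT_\xi L_g^{-1}=T_{g\xi}$, including checking that the sum over $R_+$ appearing in the difference term does not depend on the chosen positive subsystem. By contrast, the first step is a routine computation and the last step is pure bookkeeping once the first two are established.
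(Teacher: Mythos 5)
Your argument is correct. Note that the paper does not prove this statement at all—it quotes Theorem~\ref{thm: Main thm of V} from \cite{M-Y-Vk}—so there is no in-paper proof to compare against; your reconstruction (the identity $\sum_{j=1}^d x_jT_j=\mathcal{E}+\gm-A$, which on $\mathcal{P}_m$ gives $(m+\gm)-A_m=H_m^{-1}$, combined with the $G$-equivariance of $V_k$ deduced from its uniqueness and the intertwining relation $T_jV_k=V_k\pd_j$) is precisely the mechanism for which $H_m$ is designed and is the standard proof of this recurrence. The only cosmetic discrepancy is that the sum in the statement should read $\sum_{j=1}^{d}$ rather than $\sum_{j=0}^{d}$, as your derivation correctly has it.
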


\section{The Dihedral Group Setting}\label{sec: case of D} 
In this section, we suppose that the Coxeter group $G$ associated to the root system $R$ is the Dihedral group $\mathcal{D}_n$ of order $2n$ where $n\ge 2$ is arbitrary. 

$\mathcal{D}_n$ is the symmetry group of a regular $n$-gons in the plane $\R^2$. Using the identification $\R^2=\C$ and letting $w:=e^{i\pi/n}$, then 
an associated positive root system is $R_+=\set{iw^j,j=0,\dots,n-1}$ while the rotations and reflections in $\mathcal{D}_n$ are given respectively by
 $r_j: z\mapsto z w^{2j}$ and $\s_j: z\mapsto \overline{z} w^{2j}$, $j=0,\dots, n-1$.

Taking $r=r_1$ and $\s =\s_0$, we may then write $\mathcal{D}_n=\mathcal{D}_n^+\cup \mathcal{D}_n^{-}$ with
\begin{equation}\label{eq: def of r and s}
	\mathcal{D}_n^{+}:=\set{r^j, j=0,\dots,n-1}\qtxtq{and}\mathcal{D}_n^{-}:=\set{r^j \s, j=0,\dots,n-1}.	
\end{equation}
It is worth to note that one has $\s r\s=r^{-1}$ and
\begin{equation}\label{eq: sum rotations null in D_n}
	\sum_{j=0}^{n-1} r^j x=0,
\end{equation}
for all $x\in\R^2$.  It is also important to keep in mind (see \cite[Theorem 4.2.7]{DUNKL7}) that in our setting the reflections in $\mathcal{D}_n$ are given exactly by the set $\mathcal{D}_n^{-}$.

In all the sequel, we fixe an arbitrary integer $n\ge 2$ and consider $\mathcal{D}_n$ as above. We will also suppose that the parameter function $k$ is constant, that is $k:=k(\a)$ for all $\a\in R_+$.
Although that in our setting we have $\gm=nk$, we will always write $\gm$ instead of $n k$ to conserve the standard notations for this theory.
  
With these settings in mind, the operator $A$ given by \eqref{eq: def of A} rewrites
\begin{equation}\label{eq: A in Dn}
	A=k\sum_{i=0}^{n-1}L_{r^j \s}. 
\end{equation}

The next result pursues the developments in \cite{M-Y-Vk} and gives more elements in the set $M^{*}$. Further, it makes concrete the action of $H_m$ on polynomials.
\begin{proposition}\label{pro: expr of Hm}
Consider  $k\in\C$ such that 
\begin{equation}\label{eq: k condition}
	  \gm\not\in\left\{-\frac{m}{2},\quad m\in\Z^{+}\right\}.
\end{equation} 
Then $k\in M^{*}$. Moreover, in this case the operator $H_m$ is given by
	\begin{equation*}
	H_m(p)(x)=\sum_{j=0}^{n-1} a_j(m)p(r^j x)	+\sum_{j=0}^{n-1}b_j(m) p(r ^j \s x),	
	\end{equation*}
  for all positive integer $m$, where for $j=0,\dots,n-1$, we set
	\begin{equation*}	a_j(m)=\delta_{j,0}\frac{1}{m+\gm}+\frac{\gm^2}{nm(m+\gm)(m+2\gm)} \qtxtq{and}	b_j(m)=\frac{\gm}{nm(m+2\gm)},		
	\end{equation*}
	with $\delta_{j,0}=1$ if $j=0$ and $\delta_{j,0}=0$ otherwise.
\end{proposition}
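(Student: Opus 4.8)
The strategy is to reduce the whole statement to one cubic identity for the averaging operator $S:=\sum_{j=0}^{n-1}L_{r^j\s}$, which by \eqref{eq: A in Dn} satisfies $A=kS$. Put also $T:=\sum_{j=0}^{n-1}L_{r^j}$. Since $g\mapsto L_g$ is a representation of $\mathcal{D}_n$ (i.e.\ $L_gL_h=L_{gh}$) and since $\s r^l\s=r^{-l}$ and $\s r^l=r^{-l}\s$ follow from $\s r\s=r^{-1}$, a short index count gives
\begin{equation*}
S^2=nT,\qquad ST=TS=nS,\qquad T^2=nT
\end{equation*}
as operators on $\Pi^d$; consequently $S^3=S\cdot S^2=nST=n^2S$, and since $nk=\gm$ the restriction $A_m:=kS|_{\mathcal{P}_m}$ satisfies $A_m^3=\gm^2A_m$ on $\mathcal{P}_m$.

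With this relation at hand, $(m+\gm)-A_m$ can be inverted inside the commutative algebra $\C[A_m]\subset\mathrm{End}(\mathcal{P}_m)$: I would look for scalars $\a,\b,\delta$ (depending on $m$) with $\big((m+\gm)-A_m\big)\big(\a I+\b A_m+\delta A_m^2\big)=I$. Expanding the left-hand side and reducing $A_m^3$ to $\gm^2A_m$, it suffices to choose $\a,\b,\delta$ so that the coefficients of $A_m^2$ and $A_m$ vanish and the coefficient of $I$ is $1$; this triangular system has the unique solution
\begin{equation*}
\a=\frac{1}{m+\gm},\qquad \b=\frac{1}{m(m+2\gm)},\qquad \delta=\frac{1}{m(m+\gm)(m+2\gm)}.
\end{equation*}
All three scalars are well defined for every $m\ge1$ exactly under \eqref{eq: k condition}: indeed $m\ne0$, while $m+2\gm=0$ forces $\gm=-m/2$ and $m+\gm=0$ forces $\gm=-m=-(2m)/2$, both of which are excluded by $\gm\notin\{-m/2:\,m\in\Z^{+}\}$. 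Since the factors commute, this also gives a left inverse, so $(m+\gm)-A_m$ is invertible for all $m\ge1$; hence $k\in M^{*}$ and $H_m=\a I+\b A_m+\delta A_m^2$.

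It remains to turn this operator identity into the asserted pointwise formula. Here I would substitute $A_m=kS=\tfrac{\gm}{n}S$ and $A_m^2=k^2S^2=k^2nT=\tfrac{\gm^2}{n}T$, and use that each $r^j\s$ is an involution, so $L_{r^j\s}p(x)=p(r^j\s x)$, together with $\{r^{-j}:\,0\le j\le n-1\}=\{r^j:\,0\le j\le n-1\}$, so $Tp(x)=\sum_{j}p(r^jx)$. This gives
\begin{equation*}
H_m(p)(x)=\frac{1}{m+\gm}\,p(x)+\frac{\gm^2}{nm(m+\gm)(m+2\gm)}\sum_{j=0}^{n-1}p(r^jx)+\frac{\gm}{nm(m+2\gm)}\sum_{j=0}^{n-1}p(r^j\s x),
\end{equation*}
and absorbing the $j=0$ rotation term into the rotation coefficient yields precisely $H_m(p)(x)=\sum_{j}a_j(m)p(r^jx)+\sum_{j}b_j(m)p(r^j\s x)$ with the stated $a_j(m)$ and $b_j(m)$.

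The crux is the cubic relation $S^3=n^2S$ --- equivalently, that $I$, $S$, $T$ already span the relevant part of the group algebra of $\mathcal{D}_n$; once it is in place the rest is a finite computation, with no spectral issue since the annihilating polynomial $\l(\l-n)(\l+n)$ has simple roots. One should only keep in mind that the displayed identity is an identity of operators on each $\mathcal{P}_m$, and remains correct even when some of the sums $\sum_jp(r^jx)$ vanish identically on low-degree components, as happens on $\mathcal{P}_1$ by \eqref{eq: sum rotations null in D_n}.
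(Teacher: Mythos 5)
Your proof is correct, and it reaches the paper's formula by a genuinely different (though equivalent) algebraic route. The paper factors $A=\gm L_{\s}R$ with $R:=\frac1n\sum_{j}L_{r^j}$ a projector, splits each $p\in\mathcal{P}_m$ uniquely as $p=p_1+p_2$ with $Ap_1=0$ and $Ap_2=\gm L_{\s}p_2$, and inverts $(m+\gm)-A$ separately on the two pieces before reassembling. You instead extract the single annihilating relation $A_m^3=\gm^2A_m$ from the multiplication rules $S^2=nT$, $ST=TS=nS$, $T^2=nT$ in the group algebra of $\mathcal{D}_n$, and solve for the inverse by the ansatz $\a I+\b A_m+\delta A_m^2$. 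The two computations agree: in the paper's notation $L_{\s}A=\gm R=\gm^{-1}A^2$, so its closed form for $H_m(p)$ is exactly your $\a I+\b A_m+\delta A_m^2$ with $\a=\frac{1}{m+\gm}$, $\b=\frac{1}{m(m+2\gm)}$, $\delta=\frac{1}{m(m+\gm)(m+2\gm)}$. Your version has the advantage of making the two-sidedness of the inverse (everything commutes in $\C[A_m]$) and the precise non-degeneracy requirements $m+\gm\neq0$, $m+2\gm\neq0$ for all $m\ge1$ completely transparent, both of which the condition \eqref{eq: k condition} guarantees; the paper's projector decomposition is closer in spirit to how $H_m$ is applied later. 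Your coefficient computation checks out, and the translation $A_m=\frac{\gm}{n}S$, $A_m^2=\frac{\gm^2}{n}T$, together with the involutivity of the reflections $r^j\s$ and the identity $\{r^{-j}\}=\{r^j\}$, gives exactly the stated $a_j(m)$ and $b_j(m)$.
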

\begin{proof}
Take $k\in\C$ satisfying \eqref{eq: k condition} and fix any positive integer $m$. From \eqref{eq: A in Dn}, we see that $A=\gm L_{\s}R$ where $R:=\frac{1}{n}\sum_{j=0}^{n-1}L_{r^j}$ is a projector. 

Hence, each polynomial $p\in\mathcal{P}_m$ decomposes uniquely as $p=p_1+p_2$ where $p_1,p_2\in\mathcal{P}_m$ with
$A(p_1)=0$ and $A(p_2)=\gm L_\s p_2$. Therefore we get,
$$((m+\gm)-A)p_1=(m+\gm)p_1 \qtxtq{and} ((m+\gm)-A)p_2=((m+\gm)-\gm L_\s) p_2.$$
By view of  $\gm\not\in\left\{-\frac{m}{2},m\in\Z^{+}\right\}$, we may define the operator $\Lambda$ in $\mathcal{P}_m$ by
\begin{align*}
	\Lambda(p_1)&=\frac{1}{m+\gm}p_1,\\
	\Lambda (p_2)&=\left(\frac{m+\gm}{m(m+2\gm)}+\frac{\gm}{m(m+2\gm)}
	L_\s\right)p_2.	
\end{align*}
A straightforward calculation  shows that $W\circ ((m+\gm)-A)=\mr{Id}_{\mathcal{P}_m}$, thus $k\in M^{*}$ and $\Lambda=H_m$.  Further, since $\gm p_2=L_\s A(p)$, we get
\begin{align*}
H_m(p)&=\frac{1}{(m+\gm)}p+\frac{\gm^2}{m(m+\gm)(m+2\gm)}p_2+\frac{\gm}{m(m+2\gm)}L_\s p_2\\
&=\frac{1}{(m+\gm)}p+\frac{\gm}{m(m+\gm)(m+2\gm)}L_\s A (p)+\frac{1}{m(m+2\gm)}A(p),	
\end{align*}
and the proof is complete.
\end{proof}

From now on, we assume that the parameter function $k$ satisfies the condition \eqref{eq: k condition}.

Based on the definition \eqref{eq: def of E_m}, Theorem \ref{thm: Main thm of V} and a direct application of Proposition \ref{pro: expr of Hm} the proposition below gives a useful relationship between $E_{m+1}$ and $E_m$. 
\begin{proposition}\label{prop: relation E_m, E_{m+1}} For all  $m\in\Z^+$ and all $x,y\in\R^d$ we have
$$E_{m+1}(x,y)=\sum_{j=0}^{n-1}a_j(m+1) \scal{r^j x,y}E_{m}(r^j  x,y)+\sum_{j=0}^{n-1}b_j(m+1) \scal{r^j \s x,y}E_{m}(r^j \s x,y)
$$ where the constants $a_j,b_j$, $j=0,\dots,n-1$ are given by Proposition \ref{pro: expr of Hm}.
\end{proposition}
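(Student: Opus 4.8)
The plan is to apply Theorem~\ref{thm: Main thm of V} to the polynomial $p:=\scal{\cdot,y}^{m+1}\in\mathcal{P}_{m+1}$ and then substitute the explicit formula for $H\,p=H_{m+1}(p)$ furnished by Proposition~\ref{pro: expr of Hm}. Since the standing hypothesis \eqref{eq: k condition} guarantees $k\in M^{*}$, both results are available. Using \eqref{eq: def of E_m} we have $E_{m+1}(x,y)=\frac{1}{(m+1)!}V_k(p)(x)$, and Theorem~\ref{thm: Main thm of V} gives
\[
V_k(p)(x)=\sum_{i}x_i\,V_k\bigl(\pd_i(H\,p)\bigr)(x),
\]
while Proposition~\ref{pro: expr of Hm} writes $H\,p$ as the linear combination, with coefficients $a_j(m+1)$ and $b_j(m+1)$, of the polynomials $p\circ r^{j}$ and $p\circ(r^{j}\s)$, $j=0,\dots,n-1$. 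Thus the whole identity reduces to computing, for a single orthogonal map $g\in\mathcal{D}_n$ (namely some $r^j$ or $r^j\s$), the quantity $\sum_i x_i\,V_k\bigl(\pd_i(p\circ g)\bigr)(x)$ and checking that it equals $(m+1)!\,\scal{gx,y}\,E_m(gx,y)$.

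To carry out this local computation I would proceed in three short steps. First, the chain rule and the orthogonality of $g$ give $\pd_i(p\circ g)=L_{g^{-1}}\bigl(\pd_{ge_i}p\bigr)$, and, $p$ being homogeneous of degree $m+1$, $\pd_{ge_i}p=(m+1)\scal{ge_i,y}\,\scal{\cdot,y}^{m}$. Second, I would invoke the $G$-equivariance of the intertwining operator, $V_k\circ L_h=L_h\circ V_k$ for $h\in G$, which holds because $k$ is $G$-invariant and follows from the uniqueness characterization \eqref{eq: def principal of V_k} by conjugating $V_k$ with $L_h$; applied here it turns $V_k\bigl(\pd_i(p\circ g)\bigr)(x)$ into $(m+1)\scal{ge_i,y}\,V_k(\scal{\cdot,y}^{m})(gx)=(m+1)\,m!\,\scal{ge_i,y}\,E_m(gx,y)$, the last equality by \eqref{eq: def of E_m}. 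Third, multiplying by $x_i$ and summing over $i$ produces the contraction $\sum_i x_i\scal{ge_i,y}=\scal{gx,y}$, hence the announced value $(m+1)!\,\scal{gx,y}\,E_m(gx,y)$. Feeding this back, summing over $j$ against the coefficients $a_j(m+1),b_j(m+1)$, and dividing by $(m+1)!$ yields exactly the stated formula; I would also note that in $\mathcal{D}_n$ every reflection is an involution, so $p\circ(r^{j}\s)=L_{r^{j}\s}p$ and the equivariance step goes through unchanged.

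The argument is largely bookkeeping, so the one point that needs care is the equivariance of $V_k$: it is the single ingredient not spelled out in the preliminaries, and it must be applied to $\pd_i(p\circ g)$ rather than to $p\circ g$, which is precisely why the chain-rule rewriting $\pd_i(p\circ g)=L_{g^{-1}}(\pd_{ge_i}p)$ is done first. The remaining ingredients — the Euler-type identity $\sum_i x_i\scal{ge_i,y}=\scal{gx,y}$ and the recombination of the sum over $j$ — are routine.
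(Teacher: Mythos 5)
Your proof is correct and follows exactly the route the paper intends (the paper offers no written proof, declaring the proposition a ``direct application'' of \eqref{eq: def of E_m}, Theorem~\ref{thm: Main thm of V} and Proposition~\ref{pro: expr of Hm}). You rightly flag the one ingredient the paper leaves implicit --- the equivariance $V_k\circ L_h=L_h\circ V_k$, justified by uniqueness in \eqref{eq: def principal of V_k} --- which is genuinely needed here, since without it the direct computation lands on $E_m(x,r^{-j}y)$ rather than the stated $E_m(r^{j}x,y)$.
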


As an important consequence of Proposition \ref{prop: relation E_m, E_{m+1}}, with  \eqref{eq: sum rotations null in D_n} in mind, one has \begin{equation}\label{eq: expression of E_1}
	E_{1}(x,y)=\frac{1}{1+\gm}\scal{x,y}, \quad x,y\in\R^2.
\end{equation}

In the remaining of this section, we will proceed to construct a sequence that turn to be an essential ingredient for the expression of the Dunkl kernel. Before to go on, we need to introduce some additional notations.

For a positive integer $m$, we set $I_m:=\mathrm{diag}(1,\dots,1)\in \C^{m\times m}$. 

For $X\in\C^{m\times 1}$, we define its rank one associated matrix $X\otimes X\in \C^{m\times m}$ by $$X\otimes X:=X\times {}^{t}X.$$
For $x,y\in\R^2$, we define the diagonal matrix $D:=D(x,y)$ by 
\begin{equation}\label{eq: def of D}
	D(x,y):=\mathrm{diag}\left(\scal{r^j  x,y},\quad j=0,\dots,n-1\right)\in \C^{n\times n}.
\end{equation}
Finally, we define the one column vectors $W,W_s\in \C^{2n\times 1}$ by 
\begin{equation}\label{eq: def of W,Ws}
	W:={[1,\dots,1]}^{T}\qtxtq{and} W_s:={[1,\dots,1,-1,\dots,-1]}^{T}.
\end{equation}
In the sequel we will always identify $\C^{m\times 1}$ with $\C^{m}$, $m\ge 1$.

For $x,y\in\R^2$ consider the vector $X_{m}:=X_{m}(x,y)\in\C^{n}$ whose components are given by:
$$X_{m,i}(x,y)=E_m(r^i x,y),\quad i=0,\dots, n-1.$$ 

For the sake of simplicity,   we will omit the dependance on $y$ or on $x$ and $y$ together, when there is no need to reveal them. For instance, we will write, $X_{m,i}(x)$ in place of $X_{m,i}(x,y)$ and $D(x)$ or $D$ in place of $D(x,y)$ and so on.  

By Proposition \ref{prop: relation E_m, E_{m+1}} and using notation  modulo $n$ for the indices we get for $i=0,\dots, n-~1$: 
\begin{align*}
	X_{m+1,i}(x)&=E_{m+1}(r^i x,y)\\
	&=\sum_{j=0}^{n-1}a_j(m+1) \scal{r^{i+j} x,y}E_{m}(r^{i+j} x,y)+\sum_{j=0}^{n-1}b_j(m+1) \scal{r^{j-i} \s x,y}E_{m}(r^{j-i} \s x,y)\\
&=\sum_{j=0}^{n-1}a_{j-i}(m+1) \scal{r^{j} x,y}E_{m}(r^{j} x,y)+\sum_{j=0}^{n-1}b_{j+i}(m+1) \scal{r^{j} \s x,y}E_{m}(r^{j} \s x,y)\\
&=\sum_{j=0}^{n-1}a_{j-i}(m+1) \scal{r^{j} x,y}X_{m,j}(x)+\sum_{j=0}^{n-1}b_{j+i}(m+1) \scal{r^{j} \s x,y}X_{m,j}(\s x).	
\end{align*}
The n latter equations for $i=0,\dots, n-1$,  can be gathered in a matrix form as follows: 
\begin{equation}\label{eq: relation X(m+1) and X(m)}
	X_{m+1}(x)=\mathcal{A}_m D(x)X_m(x)+\mathcal{B}_m D(\s x)X_m(\s x)
\end{equation} where the matrices $\mathcal{A}_m$ and $\mathcal{B}_m$ are defined by
\begin{equation}\label{eq: def of Am}
\mathcal{A}_m:=\frac{1}{m+1+\gm}I_n+\frac{\gm^2}{n(m+1)(m+1+\gm)(m+1+2\gm)}	\mathcal{O}_n,
\end{equation}
\begin{equation}\label{eq: def of Bm}
	\mathcal{B}_m:=\frac{\gm}{n(m+1)(m+1+2\gm)}\mathcal{O}_n.
\end{equation}
where $\mathcal{O}_n$ is the square matrix of order $n$ whose entries are all equal to 1. This leads to
 $$	\begin{bmatrix}
X_{m+1}(x,y)\\X_{m+1}(\s x,y)	
\end{bmatrix}=\mathcal{W}_m D(x,y) 
	\begin{bmatrix}
X_m(x,y)\\X_m(\s x,y)	
\end{bmatrix}$$
where $\mathcal{W}_m $ is defined by
\begin{align*}
\mathcal{W}_m&:=\frac{1}{m+1+\gm}I_{2n}+\frac{\gm}{n(m+1)(m+1+2\gm)}	\begin{bmatrix}
		\frac{\gm}{m+1+\gm}\mathcal{O}_n &\mathcal{O}_n\\\mathcal{O}_n&\frac{\gm}{m+1+\gm}\mathcal{O}_n 
	\end{bmatrix}.
\end{align*}
Noting that 
$$\begin{bmatrix}
		\frac{\gm}{m+1+\gm}\mathcal{O}_n &\mathcal{O}_n\\\mathcal{O}_n&\frac{\gm}{m+1+\gm}\mathcal{O}_n 
	\end{bmatrix}=\frac{m+1+2\gm}{2(m+1+\gm)}W\otimes W-\frac{m+1}{2(m+1+\gm)}W_s\otimes W_s,$$ and setting 
		\begin{equation}\label{eq: def of Ym}
			Y_m:=Y_m(x,y)=(1+\gm)_m\begin{bmatrix}
		X_m(x,y)\\X_m(\s x,y)	
		\end{bmatrix}, 
		\end{equation} 
		where $(1+\gm)_m$ is the Pochhammer symbol, we get the starting useful relationship:
\begin{equation}\label{eq: Y(m+1) by Y(m)}
Y_{m+1}=D Y_m + \frac{\gm}{2n(m+1)}\scal{DY_m,W} W-\frac{\gm}{2n(m+1+2\gm)} \scal{DY_m,W_s} W_s,
\end{equation}
for all $m\in\Z^+$. Next, an easy induction gives   
\begin{equation}\label{eq: Y(m+1) by Y(m) developped}
	\begin{split}
		Y_{m}=D^{m-1} Y_1 &+\frac{\gm}{2n}\sum_{j=1}^{m}\frac{1}{j} \scal{DY_{j-1},W}D^{m-j} W\\ \quad &-\frac{\gm}{2n}\sum_{j=1}^{m} \frac{1}{j+2\gm}\scal{DY_{j-1},W_s}D^{m-j}W_s,
\end{split}
\end{equation}for all positive integer $m$. 

From \eqref{eq: expression of E_1} and \eqref{eq: Y(m+1) by Y(m)} we get $Y_1=DW$ and the equation  \eqref{eq: Y(m+1) by Y(m) developped} rewrites
\begin{equation}\label{eq: Y(m+1) by Y(m) pre-final}
	\begin{split}
		Y_{m}=D^{m} W &+\frac{\gm}{2n}\sum_{j=1}^{m}\frac{1}{j} \scal{DY_{j-1},W}D^{m-j} W\\ \quad &-\frac{\gm}{2n}\sum_{j=1}^{m} \frac{1}{j+2\gm}\scal{DY_{j-1},W_s}D^{m-j}W_s,
\end{split}
\end{equation}
for all positive integer $m$. Taking the scalar products of the  both members in \eqref{eq: Y(m+1) by Y(m) pre-final} with $DW$ and $DW_s$ respectively, we get
\begin{equation}\label{eq: Y(m+1) by Y(m) final-1}
	\begin{split}
	\scal{Y_{m},D W}&=\scal{D^{m+1}W ,W} +\frac{\gm}{2n}\sum_{j=1}^{m}\frac{1}{j}
	 \scal{Y_{j-1},DW}\scal{D^{m-j+1} W,W}\\
	&\quad-\frac{\gm}{2n}\sum_{j=2}^{m}\frac{1}{j+2\gm} \scal{DY_{j-1},W_s}\scal{D^{m-j+1}W_s,W},
\end{split}
\end{equation}and
\begin{equation}\label{eq: Y(m+1) by Y(m) final-2}
	\begin{split}
	\scal{Y_{m},D W_s}&=\scal{D^{m+1}W_s ,W} +\frac{\gm}{2n}\sum_{j=1}^{m}\frac{1}{j}
 \scal{Y_{j-1},DW}\scal{D^{m-j+1} W_s,W}\\
&\quad-\frac{\gm}{2n}\sum_{j=2}^{m}\frac{1}{j+2\gm} \scal{DY_{j-1},W_s}\scal{D^{m-j+1}W,W},
\end{split}
\end{equation}
for all $m\ge 1$.  The equations \eqref{eq: Y(m+1) by Y(m) final-1} and \eqref{eq: Y(m+1) by Y(m) final-2} are the fundamental pieces of our main result.


\section{The Group Associated Differential System} 
\label{sec:a_fundamental_system}
In this section we present and study a differential system related to the Dihedral group $\mathcal{D}_n$, which we will refer to in this paper, simply, as the group differential system since. It will be shown in section \ref{sec: integral expression} and \ref{sec: applications}, that the explicit expression of the Dunkl kernel as well as  its integral representation are closely related to this differential system. More precisely, we get the explicit expression of the Dunkl kernel whenever we get an appropriate solution of the homogeneous differential system associated to the group differential system.

Before to pursue our developments, we need to introduce some specific additional notations here.  Recall that for $B\in \C^{2\times 2}$, $\nrm{B}$ will denotes its supremum norm, while for $X\in \C^{2}$, $\nrm{X}$ stands for its euclidean norm.  

In all the sequel we fix $x,y\in \R^2$ and we define
 \begin{equation}\label{eq: def of a(x,y)}
 	a(x,y):=\max\set{\abs{\scal{g x,y}},\quad g\in\mathcal{D}_n}. 
 \end{equation}

In order to include the case $a(x,y)=0$ into our developments (especially when $x=0$ or $y=0$), we will allow  $\frac{1}{a(x,y)}=+\infty$ when $a(x,y)=0$.

For $0<\rho\le +\infty$, $\D_{\rho}$ is the open disk in the complex plane $\C$ centered at 0 with radius $\rho$ if $\rho<+\infty$ and the hole plane $\C$ otherwise. We let also $\D_{\rho}^{*}:=\D_{\rho}-\set{0}$ denotes its associated punctured disk.

Define the functions $g$ and $g_s$ by
\begin{align}
	g(z):=g(z,x,y)&=\sum_{i=0}^{n-1} \frac{\scal{r^ix,y}}{1-z\scal{r^i x,y}}+\sum_{i=0}^{n-1} \frac{\scal{r^i\s x,y}}{1-z\scal{r^i \s x,y}},\label{ali: def of g}\\
	g_s(z):=g_s(z,x,y)&=\sum_{i=0}^{n-1} \frac{\scal{r^ix,y}}{1-z\scal{r^i x,y}}-\sum_{i=0}^{n-1} \frac{\scal{r^i\s x,y}}{1-z\scal{r^i \s x,y}}\label{ali: def of gs}.
\end{align}
Finally, consider the $\C^2$-valued function $\Lambda$ given by
\begin{equation}\label{eq: def of matrix Lambda}
	\Lambda(z):=\Lambda(z,x,y,z)=\frac{\gm}{2n}\begin{bmatrix}
		g(z)&-g_s(z)\\g_s(z)&-g(z)
	\end{bmatrix}.
\end{equation}
$\Lambda$ is a $\C^2$-holomorphic function in $\D_{\frac{1}{a(x,y)}}$ with 
 $h(z)=\sum_{p=0}^{\infty}B_p(x,y) z^p$ and
\begin{equation}\label{eq: def matrix Bp}
	B_p(x,y):=\frac{\gm}{2n}\sum_{i=0}^{n-1}\begin{bmatrix}
		 \scal{r^ix,y}^{p+1}+\scal{r^i \s x,y}^{p+1}&- \left(\scal{r^ix,y}^{p+1}-\scal{r^i \s x,y}^{p+1}\right)\\[3pt]  \scal{r^ix,y}^{p+1}-\scal{r^i \s x,y}^{p+1}&  -\left(\scal{r^ix,y}^{p+1}+\scal{r^i \s x,y}^{p+1}\right)
	\end{bmatrix}.
\end{equation}

From now on, for the sake of simplicity, we will always omit the dependance on $x$ or $y$ or both when there is no need to reveal them.

Consider the group differential system
\begin{equation}\label{eq: full system}
X^{\prime}(z)=\begin{bmatrix}
	g(z)\\[3pt] g_s(z)
\end{bmatrix} +	\begin{bmatrix}
		\frac{\gm}{2n} g(z)&-\frac{\gm}{2n} g_s(z)\\[3pt]
		\frac{\gm}{2n} g_s(z)&-\left(\frac{2\gm}{z}+\frac{\gm}{2n}  g(z)\right),
	\end{bmatrix}X(z)
\end{equation}
and its associated homogeneous system
\begin{equation}\label{eq: Homogen-System}
	X^{\prime}(z)=\begin{bmatrix}
		\frac{\gm}{2n} g(z)&-\frac{\gm}{2n} g_s(z)\\[3pt]
		\frac{\gm}{2n} g_s(z)&-\left(\frac{2\gm}{z}+\frac{\gm}{2n}  g(z)\right)
	\end{bmatrix}X(z).		
\end{equation}

An important tool for our developments is given by the next proposition.
\begin{proposition}\label{pro: uniquness of vanishing sols}
Fix $R>0$. Let $X$ be a $\C^2$-valued holomorphic function in $\D_{R}$ solution of \eqref{eq: Homogen-System} in $\D_{R}^{*}$ and vanishing at $z=0$. Then $X=0$ in $\D_{R}$. 
\end{proposition}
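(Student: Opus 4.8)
The plan is to treat $z=0$ as a regular singular point of the homogeneous system \eqref{eq: Homogen-System} and to compare Taylor coefficients at the origin. Write the coefficient matrix of \eqref{eq: Homogen-System} as $\frac{1}{z}C+\Lambda(z)$, where $\Lambda$ is the holomorphic matrix of \eqref{eq: def of matrix Lambda} and
\[
C:=\begin{bmatrix} 0 & 0\\[2pt] 0 & -2\gm\end{bmatrix},
\]
so that the only genuine singularity is the simple pole carried by $C$. Multiplying \eqref{eq: Homogen-System} through by $z$ turns it into
\[
zX'(z)=\bigl(C+z\Lambda(z)\bigr)X(z)
\]
on $\D_R^{*}$. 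Near the origin $\Lambda$ is holomorphic (since $0$ lies in its domain of holomorphy $\D_{1/a(x,y)}$), so this identity extends to a neighbourhood of $z=0$ by continuity, both members being holomorphic there.

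Next I would use the hypothesis $X(0)=0$. Being holomorphic on $\D_R$ with a zero at the origin, $X$ has Taylor expansion $X(z)=\sum_{m\ge 1}X_m z^m$. Suppose $X\not\equiv 0$ and let $m_0\ge 1$ be the order of vanishing of $X$ at $0$, so $X_{m_0}\ne 0$. Comparing the coefficients of $z^{m_0}$ in the displayed identity — the term $z\Lambda(z)X(z)$ being $O(z^{m_0+1})$ and hence contributing nothing at this order — gives
\[
m_0\,X_{m_0}=C\,X_{m_0},\qquad\text{i.e.}\qquad (m_0 I_2-C)X_{m_0}=0 .
\]

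The crux is the observation that $m_0 I_2-C=\mathrm{diag}(m_0,\,m_0+2\gm)$ is invertible for every positive integer $m_0$: one has $m_0\ne 0$, and $m_0+2\gm\ne 0$ is exactly the standing assumption \eqref{eq: k condition} (which rules out $\gm=-m_0/2$). Hence $X_{m_0}=0$, a contradiction, so $X\equiv 0$ on $\D_R$. (Equivalently, the same comparison at an arbitrary order $z^m$ produces the recursion $(mI_2-C)X_m=\sum_{j=1}^{m-1}B_{m-1-j}X_j$, whose right-hand side involves only $X_1,\dots,X_{m-1}$, and one concludes $X_m=0$ for all $m$ by induction.) I do not anticipate a serious obstacle; the two points deserving care are clearing the pole before reading off coefficients — so that the identity is legitimate at the singular point — and verifying that the order-$z^{m_0}$ comparison never involves $X_{m_0}$ on the right, which is precisely what neutralises the resonance with the indicial exponent $0$, the other indicial exponent $-2\gm$ being excluded from $\Zp$ by \eqref{eq: k condition}.
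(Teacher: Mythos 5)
Your proof is correct and follows essentially the same route as the paper: multiply the system by $z$ to isolate the simple pole $-2\gm J$ (your $C$), expand $X$ in a Taylor series at $0$, and use that $\mathrm{diag}(m,\,m+2\gm)$ is invertible for every $m\ge 1$ thanks to \eqref{eq: k condition}. The paper runs the full recursion $(p+2\gm J)A_p=\sum_{i=0}^{p-1}B_{p-i-1}A_i$ by induction, which is exactly your parenthetical variant; your lowest-order-coefficient contradiction is just a slightly streamlined packaging of the same argument.
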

\begin{proof}
Let $R>0$ and $X$ be as in the Proposition \ref{pro: uniquness of vanishing sols}. We have then 
\begin{equation}\label{eq: gen eq}
zX^{\prime}(z)=-2\gm J \times X(z)+ z\Lambda(z)\times X(z),
\end{equation}
for all $z\in\D_{R}^{*}$. Here $\Lambda$ is the function defined by \eqref{eq: def of matrix Lambda} and $J:=\begin{bmatrix}0&0\\0&1\end{bmatrix}$.

Since $JX(0)=0$ we see that \eqref{eq: gen eq} holds for $z=0$ as well.

In other hand, by analyticity argument, there exists a sequence $A_p:=A_p(x,y)\in \C^2$, $p\in \Zp$ satisfying $X(z)=\sum_{p=0}^{+\infty}A_p z^p$  for all  $z\in \D_{R}$ and an easy calculation shows  that \eqref{eq: gen eq} equivalents to
\begin{equation}\label{eq: power series eq}
	\sum_{p=1}^{+\infty}p A_{p} z^p=2\gm J\sum_{p=0}^{+\infty}A_{p} z^p+\sum_{p=0}^{+\infty}C_p z^{p+1},
\end{equation}
where $C_p=\sum_{k=0}^{p}B_kA_{p-k}$ and the matrices $B_p$ are given by \eqref{eq: def matrix Bp}. 
 
View of \eqref{eq: sum rotations null in D_n} we see that $B_0=0$ and then \eqref{eq: power series eq} rewrites
$$\sum_{p=1}^{+\infty}p A_{p} z^p=2\gm J\sum_{p=0}^{+\infty}A_{p} z^p+\sum_{p=1}^{+\infty}C_{p-1} z^p,$$
for all $z\in\D_R$, which in turn, since $JA_0=0$, equivalents to
\begin{equation}\label{eq: induction rel1}
	A_{p}=(p+2\gm J)^{-1}\sum_{k=0}^{p-1}B_{p-i-1}A_{i},\quad (p\ge 1).
\end{equation}
Note that by our assumption on the parameter function $k$, the matrix $(p+2\gm J)^{-1}$ is well defined and an induction yields $A_p=0$ for all $p\in\Zp$.
\end{proof}

The next theorem is the last rock in our developments and is the fundamental one.
\begin{theorem}\label{thm: holomorphic sols}
There exists a constant $\delta\ge 1$ not depending on $(x,y)$ and a $\C^2$-valued holomorphic function $Q$ in $\D_{\frac{1}{\delta\,a(x,y)}}$  vanishing at $z=0$ and solution of \eqref{eq: full system} in $\D_{\frac{1}{\delta a(x,y)}}^{*}$.
$Q$ is defined by $Q(z)=\sum_{p=1}^{+\infty}A_p z^p$ where $A_p:=A_p(x,y)\in\C^2$ given by:
\begin{equation}\label{eq: def of Ap-2}
	A_0:=\left(\frac{2n}{\gm},0\right),\quad A_{p}:=\begin{bmatrix}
		\frac{1}{p}&0\\0&\frac{1}{p+2\gm}
	\end{bmatrix}\sum_{i=0}^{p-1}B_{p-i-1}A_{i},\quad (p\ge 1).
\end{equation}Here the matrices $B_p$ are given by \eqref{eq: def matrix Bp}.

The coefficient $A_p$ satisfies
\begin{equation}\label{eq: main inequality for Ap}
	\nrm{A_{p}}\le \frac{2n}{\abs{\gm}} \left(\delta  a(x,y)\right)^{p}. 
\end{equation} for all $p\in\Zp$.
Moreover, each $\C^2$-valued holomorphic function in $\D_\rho$, $\rho>0$, solution of \eqref{eq: full system} in $\D_\rho^{*}$ and vanishing at $z=0$ coincides with $Q$ in the disk $\D_{\rho}\cap \D_{\frac{1}{\delta a(x,y)}}$.
\end{theorem}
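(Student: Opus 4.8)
The plan is to recognise the series $Q$ as a disguised solution of the \emph{homogeneous} system \eqref{eq: Homogen-System}, to prove the coefficient bound \eqref{eq: main inequality for Ap} by a Cauchy--product induction that simultaneously produces the constant $\delta$, and then to deduce holomorphy and uniqueness. First I would reduce \eqref{eq: full system} to \eqref{eq: Homogen-System}. With $J:=\mathrm{diag}(0,1)$ and $\Lambda$ as in \eqref{eq: def of matrix Lambda}, the choice $A_0=\bigl(\tfrac{2n}{\gm},0\bigr)$ gives $\Lambda(z)A_0={}^{t}[\,g(z),g_s(z)\,]$ and $JA_0=0$, so the inhomogeneous term of \eqref{eq: full system} is exactly $\Lambda(z)A_0$; a short computation then shows that a $\C^2$-valued function $X$ solves \eqref{eq: full system} on $\D_\rho^{*}$ if and only if $X+A_0$ solves \eqref{eq: Homogen-System} on $\D_\rho^{*}$. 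Thus I look for $\widetilde Q:=Q+A_0=\sum_{p\ge0}A_pz^p$ solving \eqref{eq: Homogen-System}, which after multiplying by $z$ becomes $zX'(z)=-2\gm JX(z)+z\Lambda(z)X(z)$. Writing $\Lambda(z)=\sum_{p\ge0}B_pz^p$ with $B_0=0$ (true by \eqref{eq: sum rotations null in D_n}) and comparing coefficients of $z^p$, the $p=0$ identity is $2\gm JA_0=0$ (valid), and the $p\ge1$ identity is $(pI_2+2\gm J)A_p=\sum_{i=0}^{p-1}B_{p-1-i}A_i$; condition \eqref{eq: k condition} makes $pI_2+2\gm J$ invertible for every $p\ge1$, and solving for $A_p$ is exactly the recurrence of \eqref{eq: def of Ap-2}. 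Hence the $A_p$ are forced and $Q=\widetilde Q-A_0=\sum_{p\ge1}A_pz^p$; it remains to make these formal identities rigorous.

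The core of the argument is the estimate \eqref{eq: main inequality for Ap}, which I would prove by induction on $p\in\Zp$, choosing $\delta$ in the process; the case $p=0$ is the equality $\nrm{A_0}=2n/\abs{\gm}$. For the step I need two bounds that are uniform in $(x,y)$. First, from \eqref{eq: def matrix Bp} and $\abs{\scal{gx,y}}\le a(x,y)$ for every $g\in\mathcal{D}_n$ (see \eqref{eq: def of a(x,y)}), every entry of $B_q$ has modulus $\le\abs{\gm}\,a(x,y)^{q+1}$, hence $\nrm{B_qX}\le c_0\abs{\gm}\,a(x,y)^{q+1}\nrm{X}$ for $X\in\C^2$, with $c_0$ an absolute constant. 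Second, $(pI_2+2\gm J)^{-1}=\mathrm{diag}\bigl(\tfrac1p,\tfrac1{p+2\gm}\bigr)$, and \eqref{eq: k condition} together with $p/\abs{p+2\gm}\to1$ gives $\max\bigl\{\tfrac1p,\tfrac1{\abs{p+2\gm}}\bigr\}\le c_1/p$ for all $p\ge1$, where $c_1:=\sup_{p\ge1}\max\bigl\{1,\,p/\abs{p+2\gm}\bigr\}<\infty$ depends only on $\gm$. Inserting these and the induction hypothesis into the recurrence and using $\sum_{i=0}^{p-1}\delta^{i}\le p\,\delta^{p-1}$ (valid for $\delta\ge1$) yields
\begin{align*}
\nrm{A_p}&\le\frac{c_1}{p}\sum_{i=0}^{p-1}c_0\abs{\gm}\,a(x,y)^{p-i}\cdot\frac{2n}{\abs{\gm}}\bigl(\delta\,a(x,y)\bigr)^{i}\\
&=\frac{2nc_0c_1}{p}\,a(x,y)^{p}\sum_{i=0}^{p-1}\delta^{i}\le\frac{2nc_0c_1}{\delta}\bigl(\delta\,a(x,y)\bigr)^{p},
\end{align*}
so the choice $\delta:=\max\{1,\,c_0c_1\abs{\gm}\}$ --- which is $\ge1$ and independent of $(x,y)$ --- gives $\nrm{A_p}\le\frac{2n}{\abs{\gm}}\bigl(\delta\,a(x,y)\bigr)^{p}$ and closes the induction. (If $a(x,y)=0$ then all $B_q$ vanish, so $A_p=0$ for $p\ge1$ and everything is trivial, with the convention $1/(\delta\,a(x,y))=+\infty$.)

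Once \eqref{eq: main inequality for Ap} is established, the series $\sum_{p\ge0}A_pz^p$ converges normally on every compact subset of $\D_{1/(\delta a(x,y))}$, so it and $Q(z)=\sum_{p\ge1}A_pz^p$ define $\C^2$-valued holomorphic functions there with $Q(0)=0$; term-by-term differentiation makes the coefficient identities of the first paragraph genuine, hence $\widetilde Q=Q+A_0$ satisfies $z\widetilde Q'(z)=-2\gm J\widetilde Q(z)+z\Lambda(z)\widetilde Q(z)$ on the whole disk, and dividing by $z\ne0$ and undoing the reduction shows $Q$ solves \eqref{eq: full system} on $\D^{*}_{1/(\delta a(x,y))}$. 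For the uniqueness clause, let $X$ be $\C^2$-valued holomorphic in $\D_\rho$ with $\rho>0$, solving \eqref{eq: full system} in $\D_\rho^{*}$ and vanishing at $0$, and put $R:=\min\{\rho,\,1/(\delta a(x,y))\}>0$; on $\D_R^{*}$ both $X$ and $Q$ solve the linear inhomogeneous system \eqref{eq: full system}, so $X-Q$ solves the homogeneous system \eqref{eq: Homogen-System} in $\D_R^{*}$, is holomorphic in $\D_R$ and vanishes at $0$, whence $X-Q\equiv0$ in $\D_R$ by Proposition \ref{pro: uniquness of vanishing sols}, i.e.\ $X=Q$ in $\D_\rho\cap\D_{1/(\delta a(x,y))}$. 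I expect the main obstacle to be establishing \eqref{eq: main inequality for Ap}: one must obtain the bounds on $B_q$ and on $(pI_2+2\gm J)^{-1}$ with constants free of $(x,y)$ and arrange the geometric-series bookkeeping so that a single $\delta$ serves for all $p$; the reduction to \eqref{eq: Homogen-System}, the convergence step, and the uniqueness are routine once Proposition \ref{pro: uniquness of vanishing sols} is in hand.
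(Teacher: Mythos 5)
Your proof is correct and follows essentially the same route as the paper: the same recurrence for $A_p$, the bound $\nrm{B_q}\le\abs{\gm}\,a(x,y)^{q+1}$ feeding an induction that produces a $\delta$ depending only on $\gm$ and $n$, convergence of the power series on $\D_{1/(\delta a(x,y))}$, and Proposition \ref{pro: uniquness of vanishing sols} applied to the difference of two solutions for uniqueness. The one point where you improve on the paper's exposition is the explicit observation that $\Lambda(z)A_0$ equals the inhomogeneous term and $JA_0=0$, so that $X$ solves \eqref{eq: full system} iff $X+A_0$ solves \eqref{eq: Homogen-System}; the paper only asserts that ``the same arguments as in Proposition \ref{pro: uniquness of vanishing sols}'' apply, and your affine shift is precisely the justification of that claim.
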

\begin{proof}
Assume for now that $Q$ exists and is as in  Theorem \ref{thm: holomorphic sols}. Then since $Q(0)=0$, the last part of Theorem \ref{thm: holomorphic sols} follows immediately from Proposition \ref{pro: uniquness of vanishing sols}.

Now we will proceed to construct the function $Q$.   
First of all note that the relationship  \eqref{eq: def of Ap-2}
define a unique sequence $A_p\in\C^2$ depending only in $(x,y)$ and, view of \eqref{eq: def matrix Bp}, an easy induction shows that the components of $A_p(x,y)$, $p\in\Zp$ are in fact homogeneous polynomials of degree $p$ with respect to $x$ and $y$.
   
Further,  it is easy to see from \eqref{eq: def matrix Bp} that 
\begin{equation}\label{eq: estim for nrm(Bp)}
	\nrm{B_p}\le \abs{\gm}\left( a(x,y)\right)^{p+1},
\end{equation}
for all $p\in \Z^{+}$ and then 
\begin{equation}\label{eq: inequality Ap}
	\nrm{A_{p}}\le \frac{\delta }{p} \sum_{i=0}^{p-1}\left( a(x,y)\right)^{p-i}\nrm{A_{i}},
\end{equation}
where we have set $$\delta:=2\abs{\gm}\sup_{p\in \Zp}\set{1, \frac{p}{\abs{p+2\gm}}}.$$
Note that up to consider $\max(1,\delta)$, we may and will supose that $\delta\ge 1$. 

Suppose for a while that $ a(x,y)>0$. Then since $\delta\ge 1$,  the inequality \eqref{eq: inequality Ap} yields
\begin{equation*}
	\frac{\nrm{A_{p}}}{\left(\delta a(x,y)\right)^{p}}\le \frac{1 }{p}\sum_{i=0}^{p-1}\frac{\nrm{A_{i}}}{\left(\delta a(x,y)\right)^i}, 
\end{equation*}
and an immediate induction gives
\begin{equation}\label{eq: final ineq Ap}
	\nrm{A_{p}}\le \left(\delta a(x,y)\right)^{p} \nrm{A_{0}},
\end{equation}
for all $p\in \Zp$. View of \eqref{eq: inequality Ap}, we see that \eqref{eq: final ineq Ap} holds also when $ a(x,y)=0$.

Further, the function $Q$ defined by $Q(z)=\sum_{p=1}^{+\infty}A_p z^p$ is holomorphic in $\D_{\frac{1}{\delta a(x,y)}}$, vanishes at $z=0$ and the same arguments used in Proposition \ref{pro: uniquness of vanishing sols} show that $Q$ is solution of \eqref{eq: full system} in $\D_{\frac{1}{\delta a(x,y)}}^{*}$, which completes the proof.
\end{proof}
The next result solves the group differential system in the case where $x$ or $y$ is $\s$-invariant which allows us, in section \ref{sec: integral expression}, to obtain the explicit expression of the Dunkl kernel in this context.
\begin{corollary}\label{cor: invariant-setting}
	Assume that $x$ or $y$ is $\s$-invariant. Then the function $Q$ defined in Theorem \ref{thm: holomorphic sols} is given by
	$$Q(z)=-\frac{2}{k}\left(1-\prod_{i=0}^{n-1}(1-z\scal{r^i x,y})^{-k},0\right),\quad z\in\D_{\frac{1}{\delta a(x,y)}}$$ for some $\delta\ge 1$, where we make use of the principal determination of the $z\mapsto (1-z)^{-k}$ in the unit disk $\D$.
\end{corollary}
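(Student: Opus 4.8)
The plan is to verify directly that the proposed $Q$ satisfies the group differential system \eqref{eq: full system}, vanishes at $z=0$, and is holomorphic on a disk of the required form; the uniqueness assertion then comes for free from the last part of Theorem \ref{thm: holomorphic sols}. The key simplification in the $\s$-invariant case is that the functions $g$ and $g_s$ collapse. If, say, $y$ is $\s$-invariant, then $\scal{r^i\s x,y}=\scal{\s x, r^{-i}y}=\scal{\s x,\s r^{i}y}=\scal{x,r^iy}=\scal{r^i x,y}$ (using that $\s$ is orthogonal and that $\s r\s=r^{-1}$), so the two sums in \eqref{ali: def of g} coincide while those in \eqref{ali: def of gs} cancel: $g_s\equiv 0$ and $g(z)=2\sum_{i=0}^{n-1}\frac{\scal{r^i x,y}}{1-z\scal{r^i x,y}}$. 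The same holds if $x$ is $\s$-invariant, by the symmetric computation $\scal{r^i\s x,y}=\scal{\s x,r^{-i}y}=\scal{x,\s r^{-i}y}=\scal{x,r^iy}$. (I should double-check signs here, but this is routine.)

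With $g_s\equiv 0$, the system \eqref{eq: full system} decouples: writing $X=(X_1,X_2)$, the first equation becomes $X_1'(z)=g(z)+\frac{\gm}{2n}g(z)X_1(z)$, a scalar linear ODE for $X_1$ not involving $X_2$, while the second equation reads $X_2'(z)=-(\frac{2\gm}{z}+\frac{\gm}{2n}g(z))X_2(z)$, which has $X_2\equiv 0$ as the unique solution holomorphic and vanishing at $0$ (indeed any solution behaves like $z^{-2\gm}\times(\text{holomorphic})$, which is holomorphic and vanishing only if identically zero). So it remains to solve the scalar equation for $X_1$. Observe that $\frac{\gm}{2n}g(z)=\frac{\gm}{n}\sum_i\frac{\scal{r^ix,y}}{1-z\scal{r^ix,y}}=-\frac{d}{dz}\Big(\frac{\gm}{n}\sum_i\log(1-z\scal{r^ix,y})\Big)$, recalling $\gm=nk$, so this equals $-\frac{d}{dz}\log\prod_i(1-z\scal{r^ix,y})^{k}$. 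Set $P(z):=\prod_{i=0}^{n-1}(1-z\scal{r^ix,y})^{-k}$, so that $P'(z)/P(z)=\frac{\gm}{2n}g(z)$ and $P(0)=1$. One checks that $X_1(z)=-\frac{2}{k}\big(1-P(z)\big)$ solves $X_1'=g+\frac{\gm}{2n}gX_1$: indeed $X_1'=\frac{2}{k}P'$ and $g+\frac{\gm}{2n}gX_1=g\big(1+\frac{\gm}{2n}X_1\big)=g\,P$ (using $1+\frac{\gm}{2n}X_1=1-\frac{\gm}{nk}(1-P)=P$ since $\gm/(nk)=1$), while $\frac{2}{k}P'=\frac{2}{k}\cdot\frac{P'}{P}\cdot P=\frac{2}{k}\cdot\frac{\gm}{2n}g\cdot P=\frac{\gm}{nk}gP=gP$; the two sides agree. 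Also $X_1(0)=-\frac{2}{k}(1-1)=0$, so $Q(0)=0$.

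For holomorphy, each factor $(1-z\scal{r^ix,y})^{-k}$ (principal branch) is holomorphic wherever $|z\scal{r^ix,y}|<1$; since $|\scal{r^ix,y}|\le a(x,y)$ for all $i$, the product $P$, hence $Q$, is holomorphic on $\D_{1/a(x,y)}\supseteq\D_{1/(\delta a(x,y))}$ for any $\delta\ge 1$. Thus $Q$ is a $\C^2$-valued holomorphic function on $\D_{1/(\delta a(x,y))}$, vanishing at $0$, solving \eqref{eq: full system} on the punctured disk. By the uniqueness clause of Theorem \ref{thm: holomorphic sols} (with $\rho=1/(\delta a(x,y))$), this $Q$ must coincide with the canonical series solution constructed there, which is exactly the claim. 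The only genuine obstacle is the bookkeeping in the first paragraph — correctly identifying how $\s$-invariance of $x$ or of $y$ forces $\scal{r^i\s x,y}=\scal{r^ix,y}$ and hence $g_s\equiv 0$ — after which everything reduces to an elementary first-order linear ODE whose integrating factor is precisely the product appearing in the statement.
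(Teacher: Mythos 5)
Your proof is correct and follows essentially the same route as the paper: reduce to $g_s\equiv 0$ and $g(z)=2\sum_i\scal{r^ix,y}/(1-z\scal{r^ix,y})$ in the $\s$-invariant case, verify that the explicit candidate solves \eqref{eq: full system} with value $0$ at $z=0$, and invoke the uniqueness clause of Theorem \ref{thm: holomorphic sols}; you merely carry out in detail the ODE verification that the paper dismisses as ``easy to verify'' (the paper treats the case $\s y=y$ via the symmetry $g(z,x,y)=g(z,y,x)$, $g_s(z,x,y)=g_s(z,y,x)$ rather than by your direct computation, but this is immaterial). One small index slip worth fixing: when $y$ is $\s$-invariant the termwise identity is $\scal{r^i\s x,y}=\scal{r^{-i}x,y}$, not $\scal{r^ix,y}$; since $g$ and $g_s$ are sums over all $i$, the reindexing $i\mapsto -i\ (\mathrm{mod}\ n)$ still yields $g_s\equiv 0$, so your conclusion stands.
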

\begin{proof}	
Suppose that $x$ is $\s$-invariant, then we get $g_s(z)=0$ and $$g(z)=2\sum_{i=0}^{n-1} \frac{\scal{r^ix,y}}{1-z\scal{r^i x,y}},$$
	where $g$ and $g_s$ are given by \eqref{ali: def of g} and \eqref{ali: def of gs}.  In this case, is it easy to verify that the $\C^2$-holomorphic function 
	$$X(z):=-\frac{2}{k}\left(1-\prod_{i=0}^{n-1}(1-z\scal{r^i x,y})^{-k},0\right),$$
	is  solution of \eqref{eq: full system} in $\D_{\frac{1}{a(x,y)}}$ with $X(0)=0$. 
	
Since $g_s(z,x,y)=g_s(z,y,x)$ and $g(z,x,y)=g(z,y,x)$ then the same result holds when $\s y=y$. Now Theorem \ref{thm: holomorphic sols} ends the proof.
\end{proof}


\section{An Integral Expression for the Dunkl kernel} 
\label{sec: integral expression}
This section is the last step towards the main result of this paper.  We will gather here all the work done in the previous sections to derive out an integral expression for the polynomials $E_m$, $m\in\Zp$, which, as a consequence, provides  an integral expression for the Dunkl kernel. 
We mention here that in \cite{AMRI-BECHIR}, an integral representation of $E_k$ was obtained for the rank-two root system $A_2$. Also, based on the results of \cite{M-Y-Vk} and \cite[Chapter 3]{M-Thesis}, the authors in \cite{N-L-Y} present another version of \cite{AMRI-BECHIR}. 

Let  $Q(z,x,y):=(Q_1(z,x,y),Q_2(z,x,y))\in\C^2$ be as in Theorem \ref{thm: holomorphic sols} and set
 \begin{equation}\label{eq: def of Phi}
 \Phi(z,x,y):=\frac{2n}{\gm}+Q_1(z,x,y)-Q_2(z,x,y).
 \end{equation}

Our first main result is the generating series of the polynomials $E_m$ given below.
 \begin{theorem}\label{thm: generating series of Em}There exists a constant $\delta\ge1$  such that for all $x,y\in\R^2$ and  $0<\rho<\frac{1}{\delta a(x,y)}$ we have	
	 \begin{equation}\label{eq: integ expr of Em}	 	E_m(x,y)=\frac{\gm/2n}{(1+\gm)_m}\frac{1}{2i\pi}\int_{\abs{z}=\rho}\frac{\Phi(z,x,y)}{1-z\scal{x,y}}\frac{dz}{z^{m+1}}.
	 \end{equation}
		Here $\Phi$ is defined by \eqref{eq: def of Phi}. By consequence we get
   	 \begin{equation}\label{eq: generating series for Em}	 	\frac{\Phi(z,x,y)}{1-z\scal{x,y}}=\frac{2n}{\gm}\sum_{m=0}^{+\infty}(1+\gm)_mE_m(x,y).
   	 \end{equation}
 \end{theorem}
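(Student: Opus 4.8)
The plan is to identify the coefficient vector $A_p$ of Theorem~\ref{thm: holomorphic sols} with suitably normalized versions of the scalar quantities $\scal{DY_{p-1},W}$ and $\scal{DY_{p-1},W_s}$ that were already produced in Section~\ref{sec: case of D}, and then to read \eqref{eq: generating series for Em} directly off the recursion \eqref{eq: Y(m+1) by Y(m)}; the integral representation \eqref{eq: integ expr of Em} will then follow from Cauchy's coefficient formula, the analyticity being guaranteed by Theorem~\ref{thm: holomorphic sols}.

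First I would fix $x,y\in\R^2$ and set $d:=\scal{x,y}$, which is the $(0,0)$-entry of $D$. Put $u_m:=\scal{DY_m,W}$ and $v_m:=\scal{DY_m,W_s}$, and introduce the moments $c_\ell:=\scal{D^\ell W,W}$, $s_\ell:=\scal{D^\ell W_s,W}$. Expanding \eqref{ali: def of g} and \eqref{ali: def of gs} shows $g(z)=\sum_{\ell\ge1}c_\ell z^{\ell-1}$ and $g_s(z)=\sum_{\ell\ge1}s_\ell z^{\ell-1}$, so by \eqref{eq: def matrix Bp} one has $B_p=\frac{\gm}{2n}\left[\begin{smallmatrix}c_{p+1}&-s_{p+1}\\ s_{p+1}&-c_{p+1}\end{smallmatrix}\right]$; moreover $c_0=2n$, $s_0=0$, while $c_1=s_1=0$ by \eqref{eq: sum rotations null in D_n}. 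Since $E_0\equiv1$ gives $Y_0=W$, this forces $u_0=v_0=0$. Finally, writing $f_m$ for the $0$-th coordinate of $Y_m$, we have $f_m=(1+\gm)_mE_m(x,y)$, $f_0=1$, and reading the $0$-th coordinate of \eqref{eq: Y(m+1) by Y(m)} (recall that $W$ and $W_s$ both have first entry $1$) gives the scalar recursion
\[
  f_{m+1}-d\,f_m=\frac{\gm}{2n(m+1)}\,u_m-\frac{\gm}{2n(m+1+2\gm)}\,v_m,\qquad m\ge0.
\]

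The core of the argument, and the step I expect to require the most care (it is pure index bookkeeping), is the claim
\[
  A_p=\Bigl(\tfrac{u_{p-1}}{p},\ \tfrac{v_{p-1}}{p+2\gm}\Bigr)\qquad\text{for every }p\ge1,
\]
with $A_p$ as in \eqref{eq: def of Ap-2}, which I would establish by strong induction on $p$. The case $p=1$ holds because $B_0=0$ forces $A_1=0$ while $u_0=v_0=0$. For the inductive step I substitute the hypothesis into \eqref{eq: def of Ap-2}, using $B_{p-i-1}=\frac{\gm}{2n}\left[\begin{smallmatrix}c_{p-i}&-s_{p-i}\\ s_{p-i}&-c_{p-i}\end{smallmatrix}\right]$: the $i=0$ term equals $(c_p,s_p)$ thanks to $A_0=(\tfrac{2n}{\gm},0)$, and the $i=1$ term vanishes; what remains is then matched term by term with the closed recursions \eqref{eq: Y(m+1) by Y(m) final-1} and \eqref{eq: Y(m+1) by Y(m) final-2} taken at $m=p-1$ (the sums in those identities which start at $j=2$ may be extended down to $j=0$ without change, again because $u_0=v_0=0$). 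This yields $pA_{p,1}=u_{p-1}$ and $(p+2\gm)A_{p,2}=v_{p-1}$, closing the induction.

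With the claim in hand the rest is formal. From it and the scalar recursion,
\[
  \tfrac{\gm}{2n}\bigl(Q_1(z)-Q_2(z)\bigr)=\tfrac{\gm}{2n}\sum_{p\ge1}\Bigl(\tfrac{u_{p-1}}{p}-\tfrac{v_{p-1}}{p+2\gm}\Bigr)z^p=\sum_{p\ge1}(f_p-d\,f_{p-1})\,z^p=(1-dz)\sum_{m\ge0}f_m z^m-1,
\]
so that, since $\Phi=\tfrac{2n}{\gm}+(Q_1-Q_2)$ by \eqref{eq: def of Phi}, one gets $(1-dz)\sum_{m\ge0}f_m z^m=\tfrac{\gm}{2n}\Phi(z)$, i.e.
\[
  \frac{\Phi(z,x,y)}{1-z\scal{x,y}}=\frac{2n}{\gm}\sum_{m\ge0}(1+\gm)_mE_m(x,y)\,z^m,
\]
which is \eqref{eq: generating series for Em}. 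Taking $\delta\ge1$ from Theorem~\ref{thm: holomorphic sols}, the function $Q$ and hence $\Phi$ are holomorphic on $\D_{1/(\delta a(x,y))}$; since $\abs{\scal{x,y}}\le a(x,y)$ and $\delta\ge1$, the factor $(1-z\scal{x,y})^{-1}$ is holomorphic there as well, so the left-hand side is holomorphic on that disk with the displayed Taylor coefficients (in particular $\sum_m f_m z^m$ converges there). Then \eqref{eq: integ expr of Em} is exactly Cauchy's coefficient formula applied on the circle $\abs{z}=\rho$ for any $0<\rho<1/(\delta a(x,y))$.
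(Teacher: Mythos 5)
Your proof is correct, and its overall architecture is the same as the paper's: identify the data of the $Y_m$-recursion with the Taylor coefficients of $Q$, then obtain \eqref{eq: generating series for Em} as a Cauchy product and \eqref{eq: integ expr of Em} from Cauchy's coefficient formula. The one place you genuinely diverge is the identification step. The paper introduces the generating functions $F(z)=\sum_{j\ge1}\frac1j\scal{Y_{j-1},DW}z^j$ and $F_s(z)=\sum_{j\ge1}\frac{1}{j+2\gm}\scal{Y_{j-1},DW_s}z^j$, proves an a priori bound $\nrm{Y_m}\le(\delta a(x,y))^m$ to ensure convergence, checks that $(F,F_s)$ solves the group differential system \eqref{eq: full system}, and concludes $(F,F_s)=Q$ from the uniqueness part of Theorem \ref{thm: holomorphic sols}; you instead prove $A_p=\bigl(u_{p-1}/p,\ v_{p-1}/(p+2\gm)\bigr)$ by direct induction on the coefficient recursion \eqref{eq: def of Ap-2}, matching it term by term against \eqref{eq: Y(m+1) by Y(m) final-1} and \eqref{eq: Y(m+1) by Y(m) final-2}. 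The two mechanisms are equivalent---the differential system is precisely the generating-function form of \eqref{eq: def of Ap-2}---but your version is more elementary (no appeal to Proposition \ref{pro: uniquness of vanishing sols} or to verifying the ODE) and inherits convergence for free from the bound \eqref{eq: main inequality for Ap}. Two cosmetic points: the truncated sums in \eqref{eq: Y(m+1) by Y(m) final-1}--\eqref{eq: Y(m+1) by Y(m) final-2} extend down to $j=1$ (not $j=0$) using $v_0=0$; and the right-hand side of \eqref{eq: generating series for Em} as printed omits the factor $z^m$, which your computation correctly restores.
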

\begin{proof}
We use notations \eqref{eq: def of D} and \eqref{eq: def of W,Ws} where, once again, we omit the dependance on $(x,y)$. Consider the functions $f$, $f_s$, $F$ and $F_s$ defined by their respective power series where as follows:
\begin{alignat*}{2}
	f(z)&=\sum_{j=1}^\infty \scal{Y_{j},D W}z^{j}&&,\quad
	f_s(z)=\sum_{j=1}^\infty \scal{Y_{j},D W_s}z^{j}.\\
	F(z)&=\sum_{j=1}^\infty \frac{1}{j} \scal{Y_{j-1},D W}z^j&&,\quad 
	F_s(z)=\sum_{j=1}^\infty \frac{1}{j+2\gm} \scal{Y_{j-1},D W_s}z^j.
\end{alignat*}
If we let $\delta=\delta(n,k):=\max_{m\in\Z_p}(\nrm{\mathcal{A}_m}+\nrm{\mathcal{B}_m})$, where the matrices $\mathcal{A}_m$ and $\mathcal{B}_m$ are given by   \eqref{eq: def of Am} and \eqref{eq: def of Bm}, then we see from  \eqref{eq: def of Ym} and \eqref{eq: relation X(m+1) and X(m)} that $$\nrm{Y_{m+1}}\le \delta a(x,y)\nrm{Y_{m}},$$ 
for all $m\in\Zp$. We deduce that $$\nrm{Y_{m}}\le \left(\delta a(x,y)\right)^m\nrm{Y_{0}}=\left(\delta a(x,y)\right)^m,$$ which leads to $$\abs{\scal{Y_{j-1},D W}}\le \left(\delta a(x,y)\right)^j \qtxtq{and} \abs{\scal{Y_{j-1},D W_s}}\le \left(\delta a(x,y)\right)^j,$$ for all $j\ge 1$.
This shows in particular that $F$ and $F_s$ are holomorphic functions in $\D_{\frac{1}{\delta a(x,y)}}$ satisfying $F(0)=F_s(0)=0$. Further, a straightforward calculations yields
$$F^{\prime}(z)=f(z),\quad F_s^{\prime}(z)=f_s(z)-\frac{2\gm}{z}F_s(z)$$ 
for all $z\in\D_{\frac{1}{\delta a(x,y)}}$.  From the equations \eqref{eq: Y(m+1) by Y(m) final-1} and \eqref{eq: Y(m+1) by Y(m) final-2} we see  that $(F,F_s)$ is solution  of  the differential system \eqref{eq: full system} in $\D_{\frac{1}{\delta a(x,y)}}^{*}$.

Note that , up to increase $\delta$, one gets by use of  Theorem \ref{thm: holomorphic sols} that $(F(z),F_s(z))=Q(z)$ for all $z\in\D_{\frac{1}{\delta a(x,y)}}$. 

 By their definitions, we have for all $j\ge 1$ and for all   $0<\rho< \frac{1}{\delta a(x,y)}$ that
\begin{align*}
	\frac{1}{j}\scal{Y_{j-1},DW}&=\frac{1}{2i\pi}\int_{\abs{z}=\rho}\frac{F(z)}{z^{j+1}}dz=\frac{1}{2i\pi}\int_{\abs{z}=\rho}\frac{G_1(z)}{z^{j+1}}dz,
\end{align*}
and 
\begin{align*}
	\frac{1}{j+2\gm} \scal{Y_{j-1},D W_s}&=\frac{1}{2i\pi}\int_{\abs{z}=\rho}\frac{F_s(z)}{z^{j+1}}dz=\frac{1}{2i\pi}\int_{\abs{z}=\rho}\frac{G_2(z)}{z^{j+1}}dz.
\end{align*}
All of this yields to
\begin{align*}
			Y_{m}&=D^{m} W +\frac{\gm}{2n}\sum_{j=1}^{m} \left(\frac{1}{2i\pi}\int_{\abs{z}=\rho}\frac{G_1(z)}{z^{j+1}}dz\right) D^{m-j} W\\
&-\frac{\gm}{2n}\sum_{j=1}^{m} \left(\frac{1}{2i\pi}\int_{\abs{z}=\rho} \frac{G_2(z)}{z^{j+1}} dz\right) D^{m-j}W_s.	
\end{align*} 
Finally, taking the scalar product with $(1,0,\dots,0)\in\R^{2n}$ for both members above, we get
\begin{align*}
			(1+\gm)_m E_m(x,y) &=\scal{x,y}^{m}+\frac{\gm}{2n}\sum_{j=1}^{m} \left(\frac{1}{2i\pi}\int_{\abs{z}=\rho} \frac{G_1(z)-G_2(z)}{z^{j+1}} dz\right)\scal{x,y}^{m-j}\\
&=\scal{x,y}^{m}+\frac{\gm}{2n}\sum_{j=1}^{m} \left(\frac{1}{2i\pi}\int_{\abs{z}=\rho} \frac{\Phi(z)}{z^{j+1}} dz\right)\scal{x,y}^{m-j}.
\end{align*}Whence
\begin{equation}\label{eq: expr od Em by integral}
(1+\gm)_m E_m(x,y)=\frac{\gm}{2n}\sum_{j=0}^{m} \left(\frac{1}{2i\pi}\int_{\abs{z}=\rho} \frac{\Phi(z)}{z^{j+1}} dz\right)\scal{x,y}^{m-j}.
\end{equation}
The last equation is obtained noting that $\frac{1}{2i\pi}\int_{\abs{z}=\rho} \frac{\Phi(z)}{z} dz=\Phi(0)=\frac{2n}{\gm}$. The result follows noting that the right member in \eqref{eq: expr od Em by integral} is nothing but the coefficient of order $m$ in the Cauchy product of $\frac{\gm}{2n}\Phi(z)$ with the power series $\frac{1}{1-z\scal{x,y}}
		=\sum_{j=0}^{\infty} \scal{x,y}^{j} z^j$. 
\end{proof}

In the sequel we will define $\mathcal{J}:=\mathcal{J}(k,n)$ as the set of all real numbers $\delta\ge 1$ satisfying Theorem \ref{thm: generating series of Em}. View of Theorem \ref{thm: generating series of Em} and the analyticity of the function $z\mapsto \Phi(z)$, it is not hard to see that actually  $\mathcal{J}$ is an interval of the form $(\inf \mathcal{J},+\infty[$.

\medskip

For $\delta\in \mathcal{J}$, $x,y\in\R^2$ and  $0<\rho<\frac{1}{\delta a(x,y)}$ define the kernel
\begin{equation}\label{eq: def of kernel K}
	\mathrm{K}(t,\delta,\rho,x,y):=\frac{\gm^2}{2n}\frac{1}{2i\pi}\int_{\abs{z}=\rho}\frac{\Phi(z,x,y)}{z(1-z\scal{x,y})}e^{t/z} dz,
	\end{equation}where $\Phi$ is defined by \eqref{eq: def of Phi}. Our integral representation for the Dunkl kernel now reads:

\begin{theorem}\label{thm: integral repres for Ek}
 Assume that $\mathrm{Re}(\gm)>0$. Then for all $\delta\in \mathcal{J}$, $x,y\in\R^2$ and  $0<\rho<\frac{1}{\delta a(x,y)}$ we have
	$$	E_k(x,y)=\int_0^1(1-t)^{\gm-1}\mathrm{K}(t,\delta,\rho,x,y)dt,$$ where the kernel $\mathrm{K}$ is given by \eqref{eq: def of kernel K}
\end{theorem}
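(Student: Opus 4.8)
The plan is to deduce the integral formula by feeding the generating series of Theorem~\ref{thm: generating series of Em} into the kernel $\mathrm{K}$, integrating against the Beta density $(1-t)^{\gm-1}$, and recognizing the outcome as the expansion $E_k(x,y)=\sum_{m\ge0}E_m(x,y)$ recalled in Section~\ref{sec: prelim} (valid here since $\mr{Re}(\gm)>0$ forces $k\in M^{*}$). Fix $\delta\in\mathcal{J}$, $x,y\in\R^2$ and $0<\rho<\frac1{\delta a(x,y)}$.

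First I would rephrase \eqref{eq: integ expr of Em}: the number $\frac{2n}{\gm}(1+\gm)_m E_m(x,y)$ is the $m$-th Taylor coefficient at $z=0$ of $z\mapsto\frac{\Phi(z,x,y)}{1-z\scal{x,y}}$, which is holomorphic on $\D_{1/(\delta a(x,y))}$ (there $\Phi(\cdot,x,y)$ is holomorphic, as is already needed for $\mathrm{K}$ to make sense, and $1-z\scal{x,y}\ne0$ because $\abs{\scal{x,y}}\le a(x,y)$ and $\delta\ge1$), so that
\[
\frac{\Phi(z,x,y)}{1-z\scal{x,y}}=\frac{2n}{\gm}\sum_{m=0}^{\infty}(1+\gm)_m E_m(x,y)\,z^m,\qquad z\in\D_{1/(\delta a(x,y))}.
\]
Substituting this into \eqref{eq: def of kernel K} and integrating term by term on $\abs z=\rho$ — legitimate since the series converges uniformly on that circle while $z\mapsto z^{-1}e^{t/z}$ is bounded there — together with $\frac1{2i\pi}\int_{\abs z=\rho}z^{m-1}e^{t/z}\,dz=\frac{t^m}{m!}$ (expand $e^{t/z}=\sum_k\frac{t^k}{k!}z^{-k}$ and take the residue), yields
\[
\mathrm{K}(t,\delta,\rho,x,y)=\gm\sum_{m=0}^{\infty}(1+\gm)_m E_m(x,y)\,\frac{t^m}{m!},\qquad t\in[0,1].
\]
Integrating this against $(1-t)^{\gm-1}$ on $[0,1]$ and invoking the Eulerian Beta integral $\int_0^1(1-t)^{\gm-1}t^m\,dt=\frac{\Gamma(m+1)\Gamma(\gm)}{\Gamma(m+1+\gm)}=\frac{m!}{\gm\,(1+\gm)_m}$ (valid for $\mr{Re}(\gm)>0$) cancels the factor $\gm(1+\gm)_m/m!$ in every term, leaving exactly $\sum_{m\ge0}E_m(x,y)=E_k(x,y)$.

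The only points needing care are the two term-by-term interchanges. The interchange on the circle $\abs z=\rho$ is immediate by uniform convergence on that compact set. For the interchange with $\int_0^1$, Cauchy's estimate gives $\abs{(1+\gm)_m E_m(x,y)}\le\frac{\abs{\gm}}{2n}M'\rho^{-m}$ with $M':=\sup_{\abs z=\rho}\bigl|\tfrac{\Phi(z,x,y)}{1-z\scal{x,y}}\bigr|<\infty$, whence, using $t^m\le1$ on $[0,1]$ and $\int_0^1(1-t)^{\mr{Re}(\gm)-1}\,dt=1/\mr{Re}(\gm)$,
\[
\sum_{m\ge0}(1+\gm)_m\abs{E_m(x,y)}\frac1{m!}\int_0^1(1-t)^{\mr{Re}(\gm)-1}t^m\,dt\ \le\ \frac{\abs{\gm}M'}{2n\,\mr{Re}(\gm)}\sum_{m\ge0}\frac1{m!\,\rho^m}\ <\ \infty,
\]
so Fubini--Tonelli applies. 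This is the only place the hypothesis $\mr{Re}(\gm)>0$ enters — it guarantees both the integrability of $(1-t)^{\gm-1}$ near $t=1$ and the Beta evaluation — and it is the (routine) main obstacle; the rest is Pochhammer/Gamma bookkeeping.
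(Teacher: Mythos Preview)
Your proof is correct and follows essentially the same route as the paper's: both rest on the generating series of Theorem~\ref{thm: generating series of Em} together with the Beta/${}_1F_1$ identity linking $\sum_{m\ge0}\frac{w^m}{(1+\gm)_m}$ and $\gm\int_0^1(1-t)^{\gm-1}e^{tw}\,dt$. The only difference is the direction of the computation --- the paper starts from $E_k=\sum_m E_m$, pulls the sum inside the contour integral, and then invokes the hypergeometric integral representation from a handbook, whereas you first evaluate $\mathrm{K}(t,\cdot)$ as a power series in $t$ via residues and then integrate term by term using the Beta integral --- and your version supplies the Fubini justifications that the paper leaves implicit.
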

\begin{proof} Assume that $k\in M^*$ and fix  $\delta\in \mathcal{J}$, $x,y\in\R^2$. Let $\rho$ be any positive real number such that $\rho<\frac{1}{\delta a(x,y)}$. 
Appealing to \cite{M-Y-Vk} and Theorem \ref{thm: generating series of Em} we have 
\begin{align*}
	E_k(x,y)&=\sum_{m=0}^{\infty}E_m(x,y)\\
	&=\frac{\gm}{2n}\frac{1}{2i\pi}\int_{\abs{z}=\rho}\frac{\Phi(z,x,y)}{z(1-z\scal{x,y})}\left(\sum_{m=0}^{\infty}\frac{1}{(1+\gm)_m  }\frac{1}{ z^{m}}\right)dz
\end{align*}
Now if we assume further that $\mathrm{Re}(\gm)>0$, then from \cite[p. 505]{abramowitz-handbook}, we get
\begin{align*}
	E_k(x,y)&=\frac{\gm^2}{2n}\frac{1}{2i\pi}\int_{\abs{z}=\rho}\frac{\Phi(z,x,y)}{z(1-z\scal{x,y})}\left(\int_{0}^1(1-t)^{\gm-1}e^{\frac{t}{z}}dt\right)dz.
\end{align*} which completes the proof.		
\end{proof}


\section{Applications} 
\label{sec: applications}
This section gives two important applications of the integral representation established in the previous section for the polynomials $E_m$, $m\in\Zp$. Namely, we will derive out the explicit expression of $E_m(x,y)$ when one of its arguments $x$ or $y$ is $\s$-invariant and we will also give sharp estimates for the Dunkl kernel when $\mr{Re}(\gm)>-\nu$ and $\nu\in \Z_p$.

We begin by the explicit expression, and for simplicity, we write here $\gm/n=k$ when it is needed.   
\begin{theorem}\label{thm: expr in teh s-invariant case}
Let $x,y\in\R^2$. Assume that $x$ or $y$ is $\s$-invariant then for all nonnegative integer $m$ we have
\begin{align*}
 E_m(x,y)&=\frac{1}{(1+\gm)_m}\sum_{j=0}^{m} 
 \left(\sum_{\nu_0+\dots+\nu_{n-1}=j}\prod_{i=0}^{n-1}\frac{(k)_{\nu_i}}{\nu_i!}\scal{r^i
  x,y}^{\nu_i}\right) \scal{x,y}^{m-j}.
 \end{align*}
\end{theorem}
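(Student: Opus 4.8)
The plan is to combine the generating series from Theorem \ref{thm: generating series of Em} with the explicit solution of the group differential system in the $\s$-invariant case provided by Corollary \ref{cor: invariant-setting}. Since $x$ or $y$ is $\s$-invariant, Corollary \ref{cor: invariant-setting} gives
$$Q(z,x,y)=-\frac{2}{k}\left(1-\prod_{i=0}^{n-1}(1-z\scal{r^i x,y})^{-k},\,0\right),$$
so $Q_2(z,x,y)=0$ and, using $\gm=nk$ together with the definition \eqref{eq: def of Phi} of $\Phi$,
$$\Phi(z,x,y)=\frac{2n}{\gm}+Q_1(z,x,y)=\frac{2}{k}\prod_{i=0}^{n-1}(1-z\scal{r^i x,y})^{-k}.$$

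Next I would substitute this into the generating identity \eqref{eq: generating series for Em}. That identity reads $\frac{\Phi(z,x,y)}{1-z\scal{x,y}}=\frac{2n}{\gm}\sum_{m\ge0}(1+\gm)_m E_m(x,y)z^m$, valid for $\abs{z}$ small enough. Plugging in the product formula and simplifying the constant $\frac{2n}{\gm}=\frac{2}{k}$, one obtains
$$\sum_{m=0}^{\infty}(1+\gm)_m E_m(x,y)\,z^m=\frac{1}{1-z\scal{x,y}}\prod_{i=0}^{n-1}(1-z\scal{r^i x,y})^{-k}.$$
The remaining step is purely a power-series expansion: expand each factor $(1-z\scal{r^i x,y})^{-k}=\sum_{\nu_i\ge0}\frac{(k)_{\nu_i}}{\nu_i!}\scal{r^i x,y}^{\nu_i}z^{\nu_i}$ via the binomial series, expand $\frac{1}{1-z\scal{x,y}}=\sum_{\ell\ge0}\scal{x,y}^{\ell}z^{\ell}$, and collect the coefficient of $z^m$. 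The inner sum over $\nu_0+\dots+\nu_{n-1}=j$ comes from the Cauchy product of the $n$ binomial series (giving the coefficient of $z^j$ there), and the factor $\scal{x,y}^{m-j}$ comes from the geometric series, with $j$ ranging over $0,\dots,m$. Dividing by $(1+\gm)_m$ yields exactly the claimed formula.

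The only point requiring care is the convergence/validity of these manipulations: one must check that for $\abs{z}$ small the product and geometric series converge absolutely so that rearrangement into a Cauchy product is legitimate, and that the radius of convergence is at least $\frac{1}{\delta a(x,y)}$ for the $\delta$ of Theorem \ref{thm: generating series of Em}; this is immediate since each factor is holomorphic for $\abs{z}<1/a(x,y)$ by the definition \eqref{eq: def of a(x,y)} of $a(x,y)$. I do not anticipate a genuine obstacle here — the main content has already been done in Corollary \ref{cor: invariant-setting} and Theorem \ref{thm: generating series of Em}, and what remains is the bookkeeping of the multinomial expansion. (Alternatively, one could avoid the generating series and instead verify the formula directly by induction on $m$ using Proposition \ref{prop: relation E_m, E_{m+1}} together with $\sum_{j=0}^{n-1}r^j x=0$, but the generating-series route is cleaner.)
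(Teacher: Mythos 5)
Your proposal is correct and follows essentially the same route as the paper: apply Corollary \ref{cor: invariant-setting} to get $\Phi(z,x,y)=\frac{2}{k}\prod_{i=0}^{n-1}(1-z\scal{r^i x,y})^{-k}$, expand each factor as a binomial series, and read off the coefficient of $z^m$ via the generating identity of Theorem \ref{thm: generating series of Em}. The constant bookkeeping ($\frac{2n}{\gm}=\frac{2}{k}$) and the Cauchy-product extraction match the paper's argument exactly.
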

\begin{proof}
Assume that $x$ is $\s$-invariant. From Corollary \ref{cor: invariant-setting} and \eqref{eq: def of Phi}, we see that
	 \begin{align*}
	 \Phi(z,x,y)&=\frac{2}{k}\prod_{i=0}^{n-1}(1-z\scal{r^{i}x,y})^{-k}\\
	&=\frac{2}{k}\prod_{i=0}^{n-1}\sum_{\nu_i=0}^{\infty}\frac{(k)_{\nu_i}}{\nu_i!}\scal{r^ix,y}^{\nu_i}z^{\nu_i}.
	 \end{align*}
	Whence
	$$\Phi(z,x,y)=\frac{2}{k}\sum_{m=0}^\infty\left(\sum_{\nu_0+\dots+\nu_{n-1}=m}\prod_{i=0}^{n-1}\frac{(k)_{\nu_i}}{\nu_i!}\scal{r^i
	  x,y}^{\nu_i}\right)z^m$$
	and Theorem \ref{thm: generating series of Em} ends the proof.
\end{proof}

The next theorem gives new sharp estimates for the homogeneous components of the Dunkl kernel $E_k$ when $\mr{Re}(\gm)>-\nu$, $\nu\in\Zp$, which are used next to derive  appropriate ones for $E_k$. We point out here that in \cite{ROS4} and \cite{DEJEU1} other concise estimates for $E_k$ are given when $k$ is  nonnegative or with nonnegative real part respectively.
\begin{theorem}\label{thm: estimates of Em}
	Assume that $\mr{Re}(\gm)>-\nu$ where $\nu$ is a nonnegative integer. Then there exists a constant $\delta\ge 1$ and  a positive constant $C:=C(n,k,\nu)$ such that 
$$\abs{E_m(x,y)}\le C \frac{m^{\nu+2}}{m!}(\delta a(x,y))^m, \quad (m\ge 1),$$ for all $x\in\R^2$, $y\in\C^2$.
\end{theorem}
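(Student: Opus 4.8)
The plan is to read the homogeneous component $E_m$ off the generating series of Theorem~\ref{thm: generating series of Em} and to estimate its two factors --- the Pochhammer weight $1/(1+\gm)_m$ and a Taylor coefficient of $\Phi(z,x,y)/(1-z\scal{x,y})$ --- separately. First, Theorem~\ref{thm: generating series of Em} yields, for $y\in\R^2$, the identity $E_m(x,y)=\frac{\gm/2n}{(1+\gm)_m}\,c_m(x,y)$, where $c_m(x,y)$ is the coefficient of $z^m$ in the Taylor expansion at $z=0$ of $z\mapsto\Phi(z,x,y)/(1-z\scal{x,y})$, a function holomorphic in $\D_{1/(\delta a(x,y))}$ by Theorem~\ref{thm: holomorphic sols} and $\abs{\scal{x,y}}\le a(x,y)$. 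Since, by the proof of Theorem~\ref{thm: holomorphic sols}, the components of the $A_p$ are polynomials in $(x,y)$, both $E_m(x,\cdot)$ and $c_m(x,\cdot)$ are polynomials; agreeing on $\R^2$ they agree on all of $\C^2$, so the identity above holds for every $x\in\R^2$ and $y\in\C^2$. When $a(x,y)=0$ one has $\Phi\equiv 2n/\gm$ and $\scal{x,y}=0$, so $E_m(x,y)=0$ for $m\ge1$ and the bound is trivial; hence I assume from now on that $a:=a(x,y)>0$.

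Next I would bound $c_m$ by a Cauchy product of power series. By \eqref{eq: def of Phi} together with $Q(z)=\sum_{p\ge1}A_p z^p$ and $A_0=(2n/\gm,0)$, one has $\Phi(z,x,y)=\sum_{p\ge0}\phi_p z^p$ with $\phi_p:=(A_p)_1-(A_p)_2$, so \eqref{eq: main inequality for Ap} gives $\abs{\phi_p}\le 2\nrm{A_p}\le\frac{4n}{\abs{\gm}}(\delta a)^p$ for all $p\ge0$. Together with $(1-z\scal{x,y})^{-1}=\sum_{j\ge0}\scal{x,y}^j z^j$, $\abs{\scal{x,y}}\le a$, and $\delta\ge1$, the Cauchy product yields
\[
\abs{c_m(x,y)}\ \le\ \frac{4n}{\abs{\gm}}\sum_{p=0}^{m}(\delta a)^p\,a^{m-p}\ \le\ \frac{4n}{\abs{\gm}}\,(m+1)\,(\delta a)^m ,
\]
where $\delta\ge1$ is the constant furnished by Theorem~\ref{thm: holomorphic sols}. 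The appeal to \eqref{eq: main inequality for Ap} for complex $y$ is legitimate because that estimate was obtained solely from the recursion \eqref{eq: def of Ap-2} and from \eqref{eq: estim for nrm(Bp)}, neither of which uses $y\in\R^2$.

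The substantive step is the Pochhammer weight. Put $\alpha:=\mr{Re}(\gm)$, so $\alpha>-\nu$ by hypothesis, and recall that \eqref{eq: k condition} forces $j+\gm\ne0$ for every integer $j\ge1$, so $\abs{(1+\gm)_m}^{-1}=\prod_{j=1}^{m}\abs{j+\gm}^{-1}$ is finite. For $m\ge\nu+1$ I would split this product after its $\nu$-th factor: the first $\nu$ factors are bounded by a constant depending only on $(k,\nu)$, while for $j\ge\nu+1$ one has $\abs{j+\gm}\ge j+\alpha>j-\nu\ge1$, so
\[
\prod_{j=\nu+1}^{m}\frac{1}{\abs{j+\gm}}\ \le\ \prod_{j=\nu+1}^{m}\frac{1}{\,j-\nu\,}\ =\ \frac{1}{(m-\nu)!}\ \le\ \frac{m^{\nu}}{m!},
\]
the last step because $m!/(m-\nu)!=m(m-1)\cdots(m-\nu+1)\le m^{\nu}$. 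For $1\le m\le\nu$ the product $\prod_{j=1}^{m}\abs{j+\gm}^{-1}$ is bounded by a constant depending only on $(k,\nu)$ and $m^{\nu}/m!\ge m^{m}/m!\ge1$, so the same bound holds; hence $\abs{(1+\gm)_m}^{-1}\le C_3\,m^{\nu}/m!$ for all $m\ge1$, with $C_3=C_3(n,k,\nu)$.

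Finally, combining the estimates of the last two paragraphs and using $m+1\le2m$ ($m\ge1$),
\[
\abs{E_m(x,y)}=\frac{\abs{\gm}}{2n}\,\abs{(1+\gm)_m}^{-1}\,\abs{c_m(x,y)}\ \le\ 4C_3\,\frac{m^{\nu+1}}{m!}\,(\delta a)^m\ \le\ C\,\frac{m^{\nu+2}}{m!}\,\bigl(\delta\, a(x,y)\bigr)^m ,
\]
which is exactly the assertion, with $C:=4C_3=C(n,k,\nu)$ and $\delta$ the constant of Theorem~\ref{thm: holomorphic sols}. I expect the only genuinely delicate point to be the transition from real to complex $y$ --- handled by the polynomiality remark of the first step and by the fact that the estimates of Theorem~\ref{thm: holomorphic sols} make no use of $y$ being real --- while everything else is elementary bookkeeping, the one spot requiring a little care being the product estimate for $\prod_{j}\abs{j+\gm}^{-1}$, where the split at $j=\nu+1$ is precisely what produces the gain $m^{\nu}$.
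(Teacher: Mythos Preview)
Your proof is correct and follows the paper's own route: extract $E_m$ from the generating series of Theorem~\ref{thm: generating series of Em}, bound the Taylor coefficient of $\Phi(z)/(1-z\scal{x,y})$ using the estimate \eqref{eq: main inequality for Ap} on the $A_p$, and control $\abs{(1+\gm)_m}^{-1}$ by splitting the product at $j=\nu+1$. The only technical difference is that the paper bounds $\abs{\Phi(z)}$ globally and then optimises the Cauchy integral over the radius $\rho$, producing a factor $(m+2)^2$, whereas you read off the Taylor coefficient directly as a Cauchy product and obtain the sharper factor $(m+1)$; this yields $m^{\nu+1}$ rather than the stated $m^{\nu+2}$, which you then relax. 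Your handling of the passage to complex $y$ (polynomiality of $A_p$ and of $E_m$) is slightly more explicit than, but equivalent to, the paper's one-line analyticity remark.
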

\begin{proof}
	First of all by analyticity argument, Theorem \ref{thm: generating series of Em} holds for $x\in\R^2$ and $y\in\C^2$ as well. In other hand, recalling  the definition of the function $Q$  given   by Theorem \ref{thm: holomorphic sols}, we know that there exists a constant $\delta\ge 1$ such that
	\begin{align*}
		\nrm{Q(z)}&\le 	\sum_{p=1}^{\infty}\nrm{A_{p}}\abs{z}^p\le \sum_{p=1}^{\infty}\left(\delta a(x,y)\right)^{p}\abs{z}^p \nrm{A_{0}}= \frac{\delta a(x,y)\abs{z}}{1-\delta a(x,y)\abs{z}}\nrm{A_{0}},
	\end{align*}whenever $\delta a(x,y)\abs{z}<1$. Fix this constant $\delta$. Since $A_0=(\frac{2n}{\gm},0)$, this entails 
	\begin{align*}
		\abs{\Phi(z)}&\le \abs{\frac{2n}{\gm}+Q_1(z)}+\abs{Q_2(z)}\le \frac{1}{1-\delta a(x,y)\abs{z}}\nrm{A_{0}}+\frac{\delta a(x,y)\abs{z}}{1-\delta a(x,y)\abs{z}}\nrm{A_{0}}.
	\end{align*}
	Whence \begin{equation}\label{eq: estimates of M}
		\abs{\Phi(z)}\le \frac{4n/\abs{\gm}}{1-\delta a(x,y)\abs{z}},
	\end{equation} whenever $\delta a(x,y)\abs{z}<1$. From Theorem \ref{thm: generating series of Em} we infer that 
\begin{align*}		
	\abs{E_m(x,y)}&\le \frac{2}{\abs{(1+\gm)_m}}\frac{1}{(1-\rho \delta a(x,y))^2}	\frac{1}{\rho^m},	
\end{align*}
for all $\rho>0$ such that $\rho\delta a(x,y)<1$.  Put $\rho=\frac{t}{\delta a(x,y)}$,  to get that
\begin{align*}		
	\abs{E_m(x,y)}&\le \frac{2}{\abs{(1+\gm)_m}}\frac{(\delta a(x,y))^m}{t^m(1-t)^2},	
\end{align*}for all $t\in ]0,1[$. Taking the infimum over all  $t\in ]0,1[$, we obtain
	\begin{equation}\label{eq: inequality for Em, gm}
		\abs{E_m(x,y)}\le \frac{e^2}{2} \frac{(m+2)^2}{\abs{(1+\gm)_m}}(\delta a(x,y))^m,
	\end{equation}
for all $m\in\Zp$. Now, assume that $\mr{Re}(\gm)>-\nu$ for some $\nu\in\Zp$ fix a positive integer $m$. If $\nu=0$, then it is clear  that $\abs{(1+\gm)_m}\ge m!$. Suppose that $\nu\ge 1$, then we get $$\abs{(1+\gm)_m}\ge (m-\nu)!\prod_{j=1}^{\nu}\abs{\gm+j},$$ for all $m\ge \nu+1$, and then for both cases, from \eqref{eq: inequality for Em, gm} we may write
\begin{align*}		
		\abs{E_m(x,y)}&\le C_1 \frac{(m+2)^{2+\nu}}{m!}(\delta a(x,y))^m\le C_2 \frac{m^{2+\nu}}{m!}(\delta a(x,y))^m
	\end{align*}for all $m\ge \nu+1$, for some positive constants  $C_1,C_2$ depending only on $n,k$ and $\nu$. To complete the proof it suffices to choose a positive constants $C_3$ such that 
	$$\frac{e^2}{2} \frac{(m+2)^2}{\abs{(1+\gm)_m}}\le C_3 \frac{m^{2+\nu}}{m!}$$ for all $m=1,\dots,\nu+1$.
\end{proof}
\begin{corollary}\label{cor: estimates of Ek}
	Assume that $\mr{Re}(\gm)>-\nu$ where $\nu$ is a positive integer. Then there exists a constant $\delta\ge 1$ and  a positive constant $C:=C(n,k,\nu)$ such that 
	$$\abs{E_k(x,y)}\le C (\delta a(x,y)+1)^{\n+2} e^{\delta a(x,y)},$$ for all $x\in\R^2$ and $y\in\C^2$.
\end{corollary}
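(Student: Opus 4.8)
The plan is to deduce Corollary \ref{cor: estimates of Ek} directly from the homogeneous-component estimate in Theorem \ref{thm: estimates of Em} by summing the series $E_k(x,y)=\sum_{m\ge 0}E_m(x,y)$. First I would fix $\nu\in\Zp$ with $\mr{Re}(\gm)>-\nu$ and invoke Theorem \ref{thm: estimates of Em} to obtain a constant $\delta\ge 1$ and a constant $C_0=C(n,k,\nu)>0$ with $\abs{E_m(x,y)}\le C_0\,m^{\nu+2}(\delta a(x,y))^m/m!$ for all $m\ge 1$, while $E_0(x,y)=1$ is handled separately. Writing $u:=\delta a(x,y)\ge 0$, the triangle inequality gives
\begin{equation*}
\abs{E_k(x,y)}\le 1+C_0\sum_{m=1}^{\infty}\frac{m^{\nu+2}}{m!}u^{m}.
\end{equation*}

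The heart of the argument is then a clean bound on $\sum_{m\ge 1}m^{\nu+2}u^m/m!$ of the form $C'(1+u)^{\nu+2}e^{u}$. I would establish this via the identity $\sum_{m\ge 0}m^{N}u^m/m!=B_N(u)e^{u}$, where $B_N$ is the $N$-th Bell (Touchard/exponential) polynomial, which has degree $N$ and nonnegative integer coefficients; hence $B_{\nu+2}(u)\le B_{\nu+2}(1)(1+u)^{\nu+2}$ for $u\ge 0$ (bounding each monomial $u^j\le (1+u)^{\nu+2}$). Alternatively, and perhaps more self-containedly, one can write $m^{\nu+2}=\sum_{j}S(\nu+2,j)\,m(m-1)\cdots(m-j+1)$ with Stirling numbers $S(\nu+2,j)\ge 0$, note $\sum_{m\ge j}m(m-1)\cdots(m-j+1)u^m/m!=u^{j}e^{u}$, and again bound $u^j\le(1+u)^{\nu+2}$. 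Either route yields a constant $C_1=C_1(\nu)$ with $\sum_{m\ge 1}m^{\nu+2}u^m/m!\le C_1(1+u)^{\nu+2}e^{u}$.

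Combining the two displays, and using $1\le(1+u)^{\nu+2}e^{u}$ to absorb the constant term, gives
\begin{equation*}
\abs{E_k(x,y)}\le (1+C_0C_1)(1+u)^{\nu+2}e^{u}=C\,(\delta a(x,y)+1)^{\nu+2}e^{\delta a(x,y)},
\end{equation*}
with $C:=1+C_0C_1$ depending only on $n$, $k$, $\nu$, which is exactly the claimed inequality. Finally, I would note that the series defining $E_k$ converges absolutely and locally uniformly in $y\in\C^2$ by the same majorant, so the estimate and the interchange of summation with the triangle inequality are legitimate for all $x\in\R^2$, $y\in\C^2$.

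I expect the only genuinely delicate point to be the combinatorial estimate for $\sum m^{\nu+2}u^m/m!$; everything else is bookkeeping. The one subtlety worth stating carefully is that the bound $u^{j}\le(1+u)^{\nu+2}$ is valid for every $0\le j\le\nu+2$ and all $u\ge 0$, which is what lets a single power $(1+u)^{\nu+2}$ dominate the whole polynomial uniformly; I would spell this out rather than leave it implicit. No step requires anything beyond Theorem \ref{thm: estimates of Em} and elementary power-series manipulation.
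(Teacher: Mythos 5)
Your proposal is correct and follows essentially the same route as the paper: both sum the term-wise bound from Theorem \ref{thm: estimates of Em} over $m$, reduce $\sum_{m}m^{\nu+2}u^m/m!$ to a degree-$(\nu+2)$ polynomial times $e^{u}$, and then dominate that polynomial by a constant multiple of $(1+u)^{\nu+2}$. The only (immaterial) difference is the combinatorial device: you use the Touchard/Stirling identity $m^{\nu+2}=\sum_j S(\nu+2,j)\,m(m-1)\cdots(m-j+1)$, whereas the paper bounds $(m+1)^{\nu+2}\le(m+1)\cdots(m+\nu+2)$ and identifies the resulting sum with $\frac{\pd^{\nu+2}}{\pd z^{\nu+2}}\left(z^{\nu+2}e^{z}\right)$.
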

\begin{proof}
Using the fact that $E_{k}(x,y)=\sum_{m=0}^{\infty}E_m(x,y)$  the result of Theorem \ref{thm: estimates of Em}, we see that there exists $\delta\ge 1$ and a positive constant $C:=C(n,k,\nu)$ such that
\begin{align*}
\abs{E_k(x,y)}&\le C \sum_{m=0}^{\infty}\frac{(m+1)^{\n+2}}{m!}(\delta a(x,y))^m,
\end{align*} for all $x\in\R^2$, $y\in\C^2$. 
Noting that  $$(m+1)^{\nu+2}\le  (m+\nu+2)(m+\nu+1)\times\dots\times (m+1),$$ for all $m\in\Zp$,  one gets for some positive constant $C$,
\begin{align*}
\abs{E_k(x,y)}&\le C \abs{\frac{\pd^{\nu+2}}{\pd z^{\nu+2}}(z^{\nu+2}e^{z})}_{|z=\delta a(x,y)},
\end{align*}hence for some constant $C$, that changes from line to line, we have 
\begin{align*}
\abs{E_k(x,y)}&\le C \left((\delta a(x,y))^{\nu+2}+\dots+\delta a(x,y)+1\right)e^{\delta a(x,y)},
\end{align*} for all $x\in\R^2$,  $y\in\C^2$, from which the proof follows.
\end{proof}

\bibliographystyle{abbrv}
\bibliography{../Biblio/DunklOp-biblio}

\end{document}